\newtheorem{theorem}{Theorem}
\newtheorem{definition}{Definition}
\newtheorem{example}{Example}
\newtheorem{lemma}{Lemma}
\newtheorem{proposition}{Proposition}
\newtheorem{observation}{Observation}
\def\ifpdf\input{#.pdf_t}\else\input{#.ps_t}\fi1{\ifpdf\input{#1.pdf_t}\else\input{#1.ps_t}\fi}
\def\g{\gamma}
\begin{document}

\title{Directed Cycle Double Cover Conjecture: Fork Graphs}
%
%
\author{Andrea~Jim\'enez
\thanks{Instituto de Matem\'atica e Estat\'istica,  Universidade de S\~ao Paulo.
\texttt{ajimenez@ime.usp.br}. Supported by CNPq (Proc.~477203/2012-4) and FAPESP (Proc.~2011/19978-5).}
\and Martin~Loebl
\thanks{Department of Applied Mathematics, Charles University.
\texttt{loebl@kam.mff.cuni.cz}. Partially supported by the 
Czech Science Foundation GACR under the contract number P202-12-G061, CE-ITI.}}

\maketitle              

\begin{abstract}
We explore the well-known Jaeger's directed cycle double cover conjecture
which is equivalent to the assertion that every cubic bridgeless graph has 
an embedding on a closed orientable surface with no dual loop. 
We associate each cubic graph $G$ 
with a novel object $H$ that we call a \emph{hexagon graph}; perfect 
matchings of $H$ describe all embeddings of $G$ on closed orientable surfaces. 
The study of hexagon graphs leads us to define a new class of graphs that we call 
{\em lean fork-graphs}. Fork graphs are cubic bridgeless graphs obtained from a triangle by 
sequentially connecting fork-type graphs and performing Y${-}\Delta$, $\Delta{-}$Y transformations;
lean fork-graphs are fork graphs fulfilling a connectivity property.   
We prove that Jaeger's conjecture holds for the class of lean fork-graphs.
The class of lean fork-graphs is rich; namely, for each cubic bridgeless
graph $G$ there is a lean fork-graph containing a subdivision of $G$
as an induced subgraph.
Our results establish for the first time, to the best of our knowledge, 
the validity of Jaeger's conjecture in a broad inductively defined class of graphs. 
\end{abstract}

\section{Introduction}\label{sec:intro} 

One of the most challenging open problems in graph theory is the cycle double cover 
conjecture which was independently posed  by Szekeres~\cite{BAZ:4875124} and Seymour~\cite{MR538060} 
in the seventies. 
It states that every bridgeless graph has a cycle double cover, 
that is,~a system $\mathcal{C}$ of cycles such that each edge of the graph belongs to exactly two cycles of $\mathcal{C}$. 
Extensive attempts to prove the cycle double cover conjecture have led to many interesting 
concepts and conjectures. In particular, some of the stronger versions of the cycle double cover 
conjecture are related to embeddings of graphs on a surface. 
The Jaeger's directed cycle double cover conjecture~\cite{Jaeger19851}
states that every cubic graph with no bridge has a cycle double cover $\mathcal{C}$ to which 
one can prescribe orientations in such a way that the orientations of each edge of the graph 
induced by the prescribed orientations of the cycles are opposite. 
Jaeger's conjecture is equivalent to the statement that every cubic bridgeless graph has an 
 embedding on a closed orientable surface with no dual loop. 

In our previous work~\cite{jkl}, we took a new 
approach to Jaeger's conjecture, see Proposition~\ref{prop:dualloops}, 
motivated by the notion of critical embeddings. Critical embeddings are 
used extensively as a discrete tool towards mathematical understanding of criticality 
of basic statistical physics models of Ising and dimer, and conformal quantum field theory 
of free fermions~\cite{c,k,m}. We formulated the existence of 
embeddings of cubic bridgeless graphs with no dual loops as the existence of special 
perfect matchings in a subclass of braces that we call hexagon graphs. 


\subsection*{Main Contribution} 
In the current work, we introduce new key notions of {\em safe reductions} 
and {\em cut obstacles}. 
The main results of this work are summarized in 
Theorems~\ref{thm.main1},~\ref{thm.main2}
and~\ref{thm:contsubofcub}.
We prove that the directed cycle double cover conjecture 
is valid for all lean fork-graphs. The  class of all lean fork-graphs is natural and rich. 
On the one hand
this class is inductively defined starting from a triangle by
sequentially adding ``ears''; ears are the so-called fork-type graphs. 
On the other hand for each cubic bridgeless graph $G$ it is 
possible to construct a lean fork-graph that 
contains a subdivision of $G$ as an induced subgraph.

\subsection*{Related Work on dcdc} 
Jaeger's directed cycle double cover conjecture trivially 
holds in the class of cubic bridgeless planar graphs.
However, little is known about its validity in other classes of graphs.
Indeed, our results establish for the first time, 
to the best of our knowledge, the validity of Jaeger's 
conjecture in a rich inductively defined class of graphs.

\subsection*{Related Work on cdc} 
Much more is known about the weaker cycle double cover conjecture.
Jaeger~\cite{Jaeger19851} proved that any minimal counterexample to the cycle double cover conjecture is a snark;
namely,~a connected cubic graph which cannot be properly edge-colored with three colors. 
The famous snark is the Petersen graph. 
Alon and Tarsi~\cite{AlonTarsi} conjectured that the edge set of every 
bridgeless cubic graph with $m$ edges has a cycle cover where the total sum of the length of the cycles is
at most $7m/5$, and Jamshy and Tarsi~\cite{JamshyTarsi} 
proved that this conjecture implies the cycle double cover conjecture.

Existence of a cycle double cover in the classes of 3-edge-colorable and 4-edge-connected cubic 
graphs have also been positively settled~\cite{Jaeger1979205}. 
The cycle double cover conjecture also holds for all cubic bridgeless graphs 
that do not contain a subdivision of the Petersen's graph~\cite{journals/dm/AlspachZ93} 
and for graphs which have Hamiltonian paths~\cite{Goddyn1989253}. 
The important connection with the theory of nowhere-zero-flows 
is exploited in~\cite{MR1395462}.

In the next section we present the main ideas on which our work is based and formally
establish our contributions.


\section{Main Ideas and Results}\label{sec:ideasandresults}
One natural way of starting the construction of a directed cycle double cover of a cubic bridgeless 
graph $G$ with vertex set $V$ and edge set $E$ is to select a vertex $v\in V$
and to \emph{wire} its incident edges $\{v,x\}$, $\{v,y\}$, $\{v,z\}$: 
this creates a directed triangle consisting of three new directed edges $(x,y),(y,z),(z,x)$. 
Once such a directed triangle is formed, the vertex $v$ and the edges $\{v,x\}, \{v,y\}, \{v,z\}$ are 
deleted from $G$, resulting in a new \emph{mixed} graph with vertex set $V{-}\{v\}$ and 
edge set $E{-}\{\{v,x\}, \{v,y\}, \{v,z\}\}$, together with a set $\{(x,y),(y,z),(z,x)\}$ 
of directed edges of the triangle. 
We can continue this procedure by sequentially selecting a vertex  
$u$ in $V{-}\{v\}$ and wiring its incident edges and arcs.
We note that for some pairs of the created directed edges, it is forbidden to belong to the same cycle 
of a directed cycle double cover. 
{\em If} we could continue this procedure until every edge is wired, we {\em might} be able to 
show the existence of a directed cycle double cover. 
But, this naive approach leads to the following crucial questions: what do mixed graphs look 
like, in the middle of the wiring  procedure? 
For which classes of graphs is it possible to 
apply the wiring procedure until we find a directed cycle double cover?  
What are \emph{obstacles} that hinder the continuation of the wiring procedure? 
The first question leads to the definition of {\em mixed graphs}, 
and to the novel concept of {\em safe reductions}. The last two questions to concepts of 
{\em fork-collections}, {\em fork-graphs}, and {\em cut-obstacles}.

A {\em mixed graph} is a 4-tuple $(V, E, A, R)$ where $V$ is the vertex set,
$E$ is the edge set, $A$ is the set of the directed edges (arcs), and $R$ is a subset of $A{\times} A$,
that is, a set of pairs of arcs. We require that in the graph
induced by $E$, each vertex has degree at least two 
and at most three, and that each vertex of degree two is the tail
of exactly one arc and the head of exactly one arc. 
Regarding the discussion in the previous paragraph, the set $R$
contains those pairs of arcs, which cannot be together in
the same directed cycle of the constructed directed cycle double cover.

\subsection*{Safe reductions} 
A {\em reduction} of a subset $S$ of the vertices of a mixed graph is defined 
naturally as wiring the edges and directed edges incident with $S$, and updating $R$. 
However, $R$ becomes complicated and actually our life would be easier if we could avoid it. 
It turns out that indeed updating $R$ is not necessary if we perform {\em safe reductions}.
\begin{figure}[h]
\centering
\subfigure[]
{
\ifpdf\input{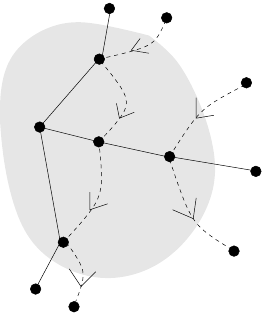_t}\else\input{safe-red-mixedgraphs_00.ps_t}\fi
 \label{fig.srmg00}
}
\subfigure[]
{
\ifpdf\input{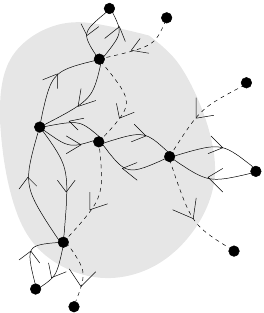_t}\else\input{safe-red-mixedgraphs_01.ps_t}\fi
  \label{fig.srmg01}
}
\subfigure[]
{
\ifpdf\input{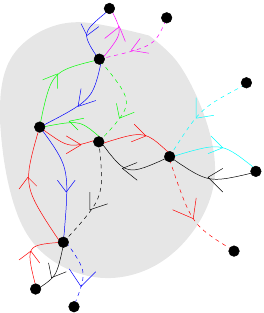_t}\else\input{safe-red-mixedgraphs_02.ps_t}\fi
 \label{fig.srmg02}
}
\subfigure[]
{
\ifpdf\input{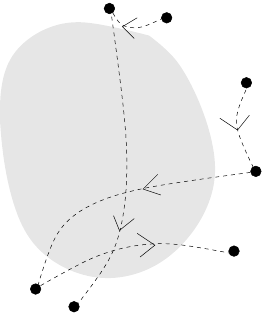_t}\else\input{safe-red-mixedgraphs_03.ps_t}\fi
  \label{fig.bf3}
}
\caption{Safe reduction of a subset of vertices $S$:
(a) elements from $A$ are re\-presented by dotted lines,
(b) replace edges in $E$ by two arcs oppositely directed,
(c) partition of $A_S\cup A'$ into safe paths and cycles and
(d) resulting structure.}
\label{fig.safe-reduction-mixed-graphs}
\end{figure}

Let $(V, E, A, R)$ be a mixed graph and $S{\subset} V$. 
We say that a mixed graph is obtained from $(V, E, A, R)$ by a {\em safe reduction} of 
$S$ if it is constructed as follows. We 
replace each edge in $E$ incident to a vertex of $S$ by two 
arcs oppositely directed; let these new arcs form the set $A'$. 
Let $A_S$ be the subset of $A$ that contains all directed edges incident to a vertex of $S$. Then we partition 
$A_S{\cup} A'$ into safe directed cycles 
and safe directed paths with both end vertices in $V{-} S$. A cycle or a 
path is {\em safe} if it has at most one edge from $A_S$, 
and if it is not a 2-cycle composed of
only edges from $A'$. 
Finally we replace each 
chosen safe path by the directed edge between its end vertices
and then, we delete the vertices of~$S$. In Figure~\ref{fig.safe-reduction-mixed-graphs}, we show an example of a safe reduction.

\subsection*{Consecutive safe reductions} 
In order to construct directed cycle double covers, 
we decide to perform only safe reductions. 
Hence, the set $R$ introduced in the definition of mixed graphs is not needed. 
We observe that to get a directed cycle double cover of a cubic graph
$G$ it is sufficient to consecutively perform safe reductions, starting
 by safely reducing a subset, say $S$, of the vertex set of~$G$.
In other words, let $(V, E, A)$ be the mixed graph obtained 
from $G$ by a safe reduction of $S$. Then this safe reduction
of $S$ along with consecutive safe reductions 
of subsets $V_1,\ldots, V_k$ that partition $V$ 
construct a directed cycle double cover of $G$. 

\subsection*{Obstacles}  Which are the configurations 
that do not allow us to perform safe reductions? 
We refer to them as {\em obstacles}. We first observe
that if $S \subset S' \subset V$, where $V$ is the vertex
set of a mixed graph, and no safe reduction of $S$ exists, then no safe
reduction of $S'$ exists. 
For instance, a bridge is an obstacle: 
if a mixed graph with vertex set $V$ has an edge whose deletion 
separates $V$ into two sets with no edge or arc between them, 
then there is no safe reduction of $V$.
Another basic notion of our reasoning is a generalization of a bridge which we call {\em cut-obstacle}.
Let $S$ be a subset of vertices of a mixed graph, and let there $E_S$ and 
$A_S$ denote the sets of edges and arcs, respectively, with exactly one end-vertex in $S$. 
We say that $S$ is a {\em cut-obstacle} if there is no set $P$ of pairs in  $E_S \cup A_S$
such that each edge of $E_S$ is in exactly two pairs of $P$,
each arc of $A_S$ is in exactly one pair of $P$ and 
no pair of $P$ contains two arcs. 

We note that bridges are cut obstacles. In addition, an important example of
a cut obstacle is formed by a subset $S$ of vertices such that the number of edges with exactly one end in 
$S$ is strictly less than twice the number of directed edges with exactly one end in $S$.

Aside of cut obstacles, there is a wide variety of other 
concrete obstacles to the existence of safe reductions. 
However, it appears hopeless to analyze and keep track of all of them. 
This leads to a natural question: 

{\em 
Is there a class $\mathcal C$ of cubic bridgeless graphs such that:
(1) the dcdc conjecture may be reduced to the dcdc conjecture for $\mathcal C$, and
(2) for each graph $G$ of $\mathcal C$, the existence of consecutive safe reductions which 
do not create cut obstacles leads to the existence of a directed cycle double cover?}

In this work we propose the class of lean fork-graphs as a candidate for such class $\mathcal{C}$.
\subsection*{Fork graphs} 
The basic structures for the construction of {\em fork graphs} are contained in
the {\em fork-collection}. The fork-collection, denoted by $\mathcal{F}$,
consists of the \emph{$i$-big-forks} for every $i\geq 1$, the \emph{p-fork}, the \emph{fork}, the \emph{star fork}, 
 the \emph{subfork} 
and the \emph{dot}.
The p-fork, the fork and the star fork are depicted
in Figure~\ref{fig.subforks},
while a subfork is simply a pair of vertices 
connected by an edge and a dot is an isolated vertex. 
The 1-big-forks are depicted in Figure~\ref{fig.bigfork}. 
If $B$ is 1-big-fork, we let $C(B)=\{x,a,y,b'\}$ be the \emph{connecting set}
of $B$.
For $i\geq2$, each $i$-big-fork $B$ is obtained from a $(i{-}1)$-big-fork $B'$ 
and a star fork $T$ by connecting two leaves of $T$ to two vertices
of degree at most two of $B'$ with the following restriction: if we connect to a vertex of degree 2,
then it cannot belong to $C(B')$. We let $C(B)=C(B')\cup\{v\}$, where $v$
is the remaining leaf of $T$.
This operation is well explained in Figure~\ref{fig.bigfork},
since each 1-big-fork is obtained from a fork and a star fork in exactly the same way.
We refer to 1-big-forks simply as big-forks.
Furthermore, the {\em exclusive fork-collection} $\mathcal E$ is a subset of $\mathcal F$ 
that contains all members of  $\mathcal F$ but the fork.

\begin{figure}[h]
\centering
\subfigure[star fork]
{
\ifpdf\input{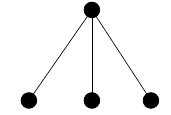_t}\else\input{starfork.ps_t}\fi
  \label{fig.starfork}
}
\qquad
\subfigure[fork]
{
\ifpdf\input{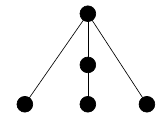_t}\else\input{fork.ps_t}\fi
 \label{fig.fork}
}\qquad
\subfigure[p-fork]
{
\ifpdf\input{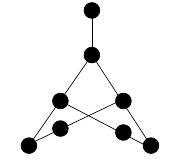_t}\else\input{petf.ps_t}\fi
 \label{fig.petfork}
}
\caption{fork-type graphs}
\label{fig.subforks}
\end{figure}

\begin{figure}[h]
\centering
\subfigure[]
{
\ifpdf\input{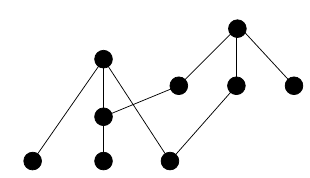_t}\else\input{big_fork_1.ps_t}\fi
 \label{fig.bf1}
}\qquad
\subfigure[]
{
\ifpdf\input{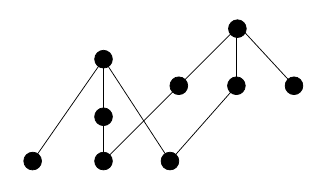_t}\else\input{big_fork_2.ps_t}\fi
  \label{fig.bf2}
}\qquad
\subfigure[]
{
\ifpdf\input{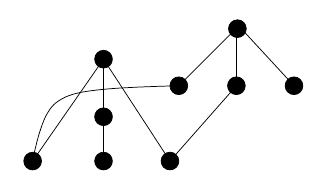_t}\else\input{big_fork_3.ps_t}\fi
  \label{fig.bf3}
}
\caption{Three kinds of big-forks. Note that a big-fork consists of the union of a fork and a star fork by means of the 
addition of two new edges.}
\label{fig.bigfork}
\end{figure}

Given a member $L$ of the fork-collection, a {\em bold $L$} is obtained 
from $L$ by adding several extra half-edges and edges 
following the next four rules. (i) If $L$ is the fork, the star fork, or the subfork
we obtain a bold $L$ by adding one half-edge to each leaf of~$L$. 
(ii) If $L$ is the p-fork, a bold $L$ is obtained by 
adding one half-edge to the leaf of $L$ and to each vertex $x$, $y$ (see Figure~\ref{fig.petfork}).
(ii) Add two or three half-edges to the dot to obtain a bold dot.
(iii) For each $j\geq 1$, if $L$ is a $j$-big-fork, we obtain a 
 bold $L$ by adding a half-edge to each vertex from the connecting
set of $L$ and add a set, possibly empty, of disjoint edges and half-edges so that 
the degrees of the vertices of the bold $L$ are at most 3.

\subsubsection*{Definition of fork graphs}
We say that a cubic graph $G$ is a {\em fork graph} if there is a sequence $G_0, 
\ldots, G_n$ of 2-connected graphs 
so that $G_0$ is a triangle, $G_n= G$ and $G_{i}$ is obtained 
from $G_{i-1}$ by connecting to its vertices of degree $2$ the half-edges
of a bold $L_i$, where $L_i$ is from the exclusive fork-collection; therefore,
half edges of a bold $L_i$ become edges of $G_i$. 
In addition, we allow that for at most one $j$, $L_j$ is the fork; the fork is depicted 
in Figure~\ref{fig.fork}. 
Moreover, we can perform several Y${-}\Delta$, $\Delta{-}$Y transformations;
that is,~replacement of a vertex by a triangle, and vice-versa.
We say, in the situations described above, that $G_{i}$ is obtained from $G_{i-1}$ by {\em addition} of a bold $L_i$.

\begin{example}[\underline{Petersen's graph}]
Let $G_1$ be the graph obtained from a triangle by addition of a bold p-fork such that $G_1$ has exactly
3 vertices of degree two. Let $G_2$ be the cubic graph obtained from $G_1$ by addition of a bold dot.
The graph obtained from $G_2$ by performing one $\Delta{-}$\emph{Y} transformation at the initial triangle 
is the Petersen's graph. 
\end{example}

\smallskip

Our first main contribution concerns cut-type sufficient conditions for the existence of a safe reduction 
and therefore the existence of a directed cycle double cover conjecture.

\begin{theorem}
\label{thm.main1}
Let $G$ be a fork graph and $G_0, \ldots, G_n$ be its building sequence.
Let $i\leq n$ and $G_i$ be obtained from $G_{i-1}$ by addition of a bold $L_i$.
If $V(G){-}V(G_i)$ can be safely reduce in $G$ 
and $G'_i$ denotes the obtained mixed graph, then the following two statements hold.
\begin{enumerate}
 \item  If $L_i$ is not a $j$-big-fork for all $j{\geq}1$, then $V(L_i)$ 
can be safely reduced in $G'_i$. 
 \item If $L_i$ is a $j$-big-fork for some $j{\geq}1$ and $L_i$ is not a cut-obstacle in $G'_i$,
 then $V(L_i)$ can be safely reduced in~$G'_i$. 
\end{enumerate}
\end{theorem}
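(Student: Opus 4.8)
The plan is to prove both statements by analyzing, piece by piece, the combinatorial structure of the arcs and edges that the bold $L_i$ contributes to the mixed graph $G'_i$, and to show directly that a valid partition of $A_{V(L_i)} \cup A'$ into safe paths and cycles always exists. First I would set up the bookkeeping: after safely reducing $V(G) - V(G_i)$, the mixed graph $G'_i$ has $V(L_i)$ sitting inside $G'_i$, and the interface between $V(L_i)$ and the rest of $G'_i$ consists of the half-edges that were added when $L_i$ was glued in. By the definition of \emph{addition of a bold $L_i$} and the requirement on mixed graphs that every degree-2 vertex is the tail of exactly one arc and the head of exactly one arc, each boundary connection of $V(L_i)$ carries a controlled local configuration of arcs. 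The key preliminary observation is that the safe reduction of $V(G) - V(G_i)$ guarantees that no cut-obstacle was created outside, so the arcs entering $V(L_i)$ come in a balanced way that I can pair up.

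Next I would carry out a case analysis driven by the exclusive fork-collection. For statement~(1), $L_i$ is one of the small fork-type graphs: the p-fork, the star fork, the subfork, or the dot (the fork is permitted at most once and would be handled by the same local argument). For each such $L_i$ I would exhibit explicitly a partition of $A_{V(L_i)} \cup A'$ into safe paths and cycles, where recall a path or cycle is \emph{safe} when it uses at most one arc from $A_{V(L_i)}$ and is not a $2$-cycle made only of arcs from $A'$. Because these graphs have a bounded number of vertices and a small, fixed number of boundary half-edges, this reduces to checking a finite list of local patterns; the replacement of each incident edge by two oppositely directed arcs gives exactly the freedom needed to route the incoming arcs through the interior of $L_i$ and reconnect them into safe paths with both endpoints in $V(G'_i) - V(L_i)$. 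I would organize this verification by the number and type of boundary edges so that the routing is uniform across the cases.

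For statement~(2), where $L_i$ is a $j$-big-fork, the plan is to exploit the recursive construction of big-forks: an $i$-big-fork is built from an $(i{-}1)$-big-fork and a star fork by adding two connecting edges, and its connecting set $C(B)$ grows by exactly one vertex at each stage. I would argue by induction on $j$, using the base case (the three kinds of $1$-big-forks in Figure~\ref{fig.bigfork}, each a fork glued to a star fork) established by the same finite local check as in statement~(1). For the inductive step, I would peel off the outermost star fork, apply the inductive hypothesis to the remaining $(j{-}1)$-big-fork together with the arcs routed through the two new connecting edges, and splice the routings together. The hypothesis that $L_i$ is \emph{not} a cut-obstacle in $G'_i$ is precisely what rules out the single global obstruction to this splicing: by the definition of cut-obstacle, it guarantees the existence of a set $P$ of pairs on the boundary edges and arcs with each edge used twice, each arc once, and no pair consisting of two arcs — and such a set $P$ is exactly the data from which a safe partition can be assembled.

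The hard part will be the inductive step of statement~(2): showing that the local routing one obtains from the non-cut-obstacle condition can be made compatible with the recursively constructed routing of the inner $(j{-}1)$-big-fork \emph{without} producing a forbidden $2$-cycle of $A'$-arcs or accidentally leaving two arcs from $A_{V(L_i)}$ on a single path. Ensuring safety globally, rather than merely locally on the two freshly added connecting edges, is where the restriction in the big-fork construction — that a degree-$2$ vertex one connects to must not lie in $C(B')$ — must be invoked to keep the interface of the inner big-fork from being contaminated. I expect that translating the abstract pairing $P$ guaranteed by ``not a cut-obstacle'' into an honest vertex-disjoint routing through the interior of $L_i$, and checking safety of every resulting path and cycle, will require the most care.
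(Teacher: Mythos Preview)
Your outline matches the paper's high-level strategy: a finite case analysis for the small members of the fork-collection and an induction on $j$ for the $j$-big-forks, peeling off the outermost star fork. Two substantive differences and one genuine gap are worth flagging.

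\textbf{Framework.} The paper does not work directly with mixed graphs. It first translates everything into \emph{pseudohexes} (hexagon graphs with coloured edges), so that a safe reduction of $V(L_i)$ becomes a choice of blue perfect matchings on the hexagons $h_v$, $v\in V(L_i)$. Theorem~\ref{thm.main1} is then obtained as a corollary of Lemma~\ref{lemma:corrsafred} together with four pseudohex reduction theorems (for the fork, for the star fork/subfork/dot, for the p-fork, and for $j$-big-forks). Your plan to stay in the mixed-graph world is legitimate, but the hexagon encoding is what lets the paper turn ``find a safe routing'' into ``pick one of two complementary matchings $M_\bullet$ or $N_\bullet$ on each hexagon'', which organises the case analysis.

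\textbf{Intermediate obstacles.} For the fork you say it will be ``handled by the same local argument''; the paper's treatment is considerably heavier than for the dot/subfork/star fork. Beyond the cut-obstacle analogue ($F$-obstacle), the paper isolates two further bad configurations ($F$-abad, $F$-bbad) and reduces the fork to a $3$-ear, which in turn has its own secondary obstacles ($P$-danger, $P$-bad). Most of the work in the fork and big-fork proofs is showing that a reduction can be chosen so that these secondary obstacles do not appear on the remaining piece. Your proposal does not anticipate these, and the claim that cut-obstacle is the ``single global obstruction'' is too optimistic.

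\textbf{The gap in the inductive step.} Your plan for statement~(2) is to use the boundary pairing $P$ guaranteed by ``$L_i$ is not a cut-obstacle'' and then apply the inductive hypothesis to the inner $(j{-}1)$-big-fork $B'$. The missing link is: why is $B'$ not a cut-obstacle \emph{after} you have reduced the outer star fork? The pairing $P$ lives on the boundary of the whole $B$, not on the boundary of $B'$, and an arbitrary safe reduction of the star fork can very well leave $B'$ as a cut-obstacle. The paper's mechanism is different and more concrete: ``$B$ is not a $B$-obstacle'' is used only to deduce that some derived white edge has both endpoints inside $V(B_K)$; one then cases on where that edge sits and invokes a structural observation (Observation~\ref{o.super}) to choose a safe reduction of the star fork that \emph{creates} a derived white edge with both endpoints in $V(B'_K)$, which is exactly what certifies that $B'$ is not an obstacle and lets the induction proceed. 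Your proposal would need an analogue of this step, and the pairing $P$ by itself does not supply it.
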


\subsection*{Lean fork-graphs} 
Let $G$ be a fork graph and $G_0, \ldots, G_n$ be its building sequence.
For $i\leq n$ and $j\geq 1$ , let $G_i$ be obtained from $G_{i-1}$ by addition of a bold $j$-big-fork $L_i$.
Since bold $L_i$ has at most $j+4$ vertices of degree 2, we observe that there are
at most $j+4$ vertex-disjoint paths from $V(L_i)$ to $V(G_{i-1})$ using
only edges from $E(G){-}E(G_{i})$. The connectivity property of a lean fork graph
is that instead of $j+4$, at most $j+3$ vertex-disjoint paths are allowed. Namely, $G$ is said to 
be {\em lean} if for each $i$ such that $G_{i}$ 
is obtained from $G_{i-1}$ by addition of a bold $j$-big-fork $L_i$ for some $j{\geq} 1$, there are at most $j+3$ 
vertex-disjoint paths from $V(L_i)$ to $V(G_{i-1})$ using 
only edges from $E(G){-}E(G_{i})$.

 Our next main contribution is confirmation of Jaeger's conjecture for the class of all lean fork-graphs.

\begin{theorem}
\label{thm.main2}
{The{ }directed cycle double cover conjecture holds for all lean~fork{-}graphs.}
\end{theorem}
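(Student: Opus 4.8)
The plan is to realize the directed cycle double cover of $G$ via a sequence of consecutive safe reductions, exploiting the building sequence $G_0,\ldots,G_n$ of $G$ in reverse order. As observed in Section~\ref{sec:ideasandresults}, it suffices to exhibit a partition of $V(G)$ into blocks that can be safely reduced one after another, since such a sequence of safe reductions constructs a directed cycle double cover. I take as blocks the vertex sets $V(L_n),V(L_{n-1}),\ldots,V(L_1)$ of the successively added bold forks, followed by the vertex set $V(G_0)$ of the initial triangle. Because the half-edges of a bold $L_i$ are attached to degree-two vertices of $G_{i-1}$, the fork contributes only genuinely new vertices, so $V(G_i)=V(G_{i-1})\sqcup V(L_i)$ and hence $V(G)\setminus V(G_{i-1})=(V(G)\setminus V(G_i))\cup V(L_i)$. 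Reducing the blocks in the order $n,n-1,\ldots,1$ is therefore exactly the process of safely reducing $V(G)\setminus V(G_{i-1})$ for decreasing $i$.

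First I would prove, by downward induction on $i$, that $V(G)\setminus V(G_{i-1})$ admits consecutive safe reductions. For $i=n$ the set $V(G)\setminus V(G_n)$ is empty and $G'_n=G$, so the claim reduces to safely reducing $V(L_n)$ in $G$ itself. For the inductive step, the hypothesis is that $V(G)\setminus V(G_i)$ has already been safely reduced, producing precisely the mixed graph $G'_i$ of Theorem~\ref{thm.main1}; it then remains to safely reduce the single block $V(L_i)$ in $G'_i$. Here Theorem~\ref{thm.main1} does almost all the work: if $L_i$ is \emph{not} a $j$-big-fork, and in particular if $L_i$ is the at most one permitted occurrence of the fork, then part~(1) yields the safe reduction of $V(L_i)$ unconditionally; if $L_i$ is a $j$-big-fork, part~(2) yields it provided $L_i$ is not a cut-obstacle in $G'_i$.

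The crux, and the step I expect to be the main obstacle, is therefore to show that leanness guarantees that every added bold $j$-big-fork $L_i$ fails to be a cut-obstacle in $G'_i$. The cut-obstacle condition is a deficiency condition relating the boundary edges $E_{V(L_i)}$ and boundary arcs $A_{V(L_i)}$ of $V(L_i)$ in $G'_i$: the pairing $P$ exists precisely when there are enough boundary edges to absorb the boundary arcs. I would first analyze the provenance of the arcs in $A_{V(L_i)}$. Each such arc is the trace of a safe path created when $V(G)\setminus V(G_i)$ was reduced, and a boundary arc crossing into $V(G_{i-1})$ corresponds to a path in $G$ from $V(L_i)$ to $V(G_{i-1})$ that uses only edges of $E(G)\setminus E(G_i)$. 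Since these safe paths are internally disjoint in the reduced region, their number is bounded, via a Menger-type argument, by the number of vertex-disjoint paths from $V(L_i)$ to $V(G_{i-1})$ in $E(G)\setminus E(G_i)$, which by leanness is at most $j+3$. Because a $j$-big-fork has connecting set of size $j+3$ and the bold $L_i$ attaches to the rest of the graph through these $j+3$ vertices together with its internal edge structure, a direct count, carried out over the three shapes of big-forks in Figure~\ref{fig.bigfork}, then shows that the number of boundary edges always suffices to construct the required pairing $P$; hence $L_i$ is not a cut-obstacle.

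Finally I would close the induction at $i=0$: once all fork blocks have been reduced, the remaining mixed graph is supported on the triangle $V(G_0)$, which is safely reducible as the elementary base case of the wiring procedure, and together with the preceding reductions this yields a directed cycle double cover of $G$. The Y${-}\Delta$ and $\Delta{-}$Y transformations permitted in the definition of fork graphs are accommodated separately, since replacing a vertex by a triangle and vice versa preserves both bridgelessness and the existence of a directed cycle double cover and can be transported through the reduction scheme. This completes the proof that Jaeger's conjecture holds for all lean fork-graphs.
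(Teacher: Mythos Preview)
Your approach matches the paper's: peel off the $L_i$ in reverse order via Theorem~\ref{thm.main1}, and use a Menger-type argument to show that leanness prevents any added $j$-big-fork from being a cut-obstacle in $G'_i$ (the paper phrases this as a contradiction---a cut-obstacle would force $2(j{+}4)$ boundary arcs, hence $j{+}4$ vertex-disjoint paths in $G-E(G_i)$---but that is just the contrapositive of your direct bound). One quantitative slip to correct: the safe paths through $V(G)\setminus V(G_i)$ are only \emph{arc}-disjoint in the doubled digraph, so an undirected path can carry two of them, and leanness therefore bounds the number of boundary arcs by $2(j{+}3)$, not $j{+}3$; this still yields $|A_{V(L_i)}|\le 2(j{+}3)\le 2|E_{V(L_i)}|$ since the connecting-set half-edges give $|E_{V(L_i)}|\ge j{+}3$, so no cut-obstacle arises and the case analysis over the three shapes of Figure~\ref{fig.bigfork} (which in any case covers only $j=1$) is unnecessary.
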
 

Finally, we show that each cubic bridgeless graph $G$ is naturally embedded in a lean~fork-graph.

\begin{theorem}\label{thm:contsubofcub}
For every cubic bridgeless graph $G$ there exists a lean fork graph that contains a subdivision of $G$
as an induced subgraph.
\end{theorem}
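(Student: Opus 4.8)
The plan is to exhibit, for a given cubic bridgeless graph $G$, an explicit building sequence of a fork graph $H$ whose vertex set contains a copy of a subdivision $G^{*}$ of $G$ that is induced, and to invoke big-forks only in a controlled way so that the leanness inequality is maintained. First I would record the elementary fact that a cubic bridgeless graph is $2$-connected: a cut vertex $v$ would split $G$ into components each joined to $v$ by at least two edges (otherwise a bridge appears), which is impossible with only three edges at $v$. Hence $G$ admits an (open) ear decomposition $C_{0}\cup P_{1}\cup\cdots\cup P_{t}$, and I would use this as the backbone for laying out $G^{*}$.

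The decisive structural observation is that the set of degree-$2$ vertices of the intermediate graphs $G_{i}$ --- call it the frontier --- can be enlarged only by adding big-forks: each bold subfork, star fork, p-fork or dot changes the size of the frontier by a nonpositive amount, whereas a bold $j$-big-fork contributes its internal degree-$2$ vertices and can widen the frontier by about $j$. Since containing an induced subdivision of $G$ forces the frontier to reach, at some stage, a width comparable to a vertex-separation parameter of $G$, the construction must invoke big-forks precisely to create enough simultaneously-available attachment points to route the edges of $G$. Concretely, I would build $G^{*}$ by: (1) growing the initial triangle, through big-fork additions, into a frontier of the required width $w$; (2) placing one vertex $\hat v$ per original vertex $v$ of $G$, realized as a degree-$3$ vertex produced by a bold dot with three half-edges (or a star fork) attached to the frontier; and (3) realizing every edge $uv\in E(G)$ as a path $\hat u\,{-}\,w_{1}\,{-}\,\cdots\,{-}\,\hat v$ whose interior subdivision vertices $w_{k}$ are created by bold subforks. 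Net-negative bold dots added last close all remaining frontier vertices to degree three, so that $H$ is cubic.

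To guarantee that $G^{*}$ is induced, I would keep a clean separation between skeleton and padding. Each subdivision vertex $w_{k}$ has degree $2$ inside $G^{*}$ but degree $3$ in $H$, so its third edge must leave $G^{*}$; I would fix the ordering so that this third edge always attaches to an auxiliary frontier vertex belonging to the big-fork padding, never to another vertex of $G^{*}$. Since each $\hat v$ already uses all three of its edges inside $G^{*}$, no chord can then appear and the copy is induced. Meanwhile $2$-connectedness of every $G_{i}$ is preserved because each added bold $L_{i}$ is attached to at least two frontier vertices, so its addition amounts to grafting an ear onto the already $2$-connected graph grown from the triangle, and no cut vertex is created.

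The main obstacle, and the part demanding the most care, is reconciling the two opposing demands placed on the big-forks: they must widen the frontier enough to route all of $G$, so their external connectivity wants to be large, yet leanness caps the number of vertex-disjoint paths from $V(L_{i})$ to $V(G_{i-1})$ through the not-yet-added edges $E(G){-}E(G_{i})$ at $j+3$ rather than the generic $j+4$. I would meet the cap by always tying down one frontier vertex of each newly added $j$-big-fork internally --- for instance by immediately joining two of its degree-$2$ vertices through a single bold subfork, or by using one of the optional padding edges permitted in a bold big-fork --- so that one potential external path is destroyed and exactly $j+3$ remain. Verifying that this local adjustment can be carried out at every big-fork step without obstructing the routing of $G$, while simultaneously preserving inducedness and $2$-connectivity, is the crux; the per-gadget counting of frontier vertices is the bookkeeping that makes the whole scheme go through.
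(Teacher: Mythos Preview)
Your two-phase outline---first grow the frontier by stacking big-forks on the triangle, then route a subdivision of $G$ through that frontier with subforks and dots, guided by an ear decomposition---coincides with the paper's construction. The gap is in the leanness step.

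Neither of your proposed tie-down mechanisms works. If you absorb a frontier vertex with an optional internal edge of the bold $j$-big-fork, the degree-$2$ count in $L_i$ drops from $j{+}4$ to $j{+}2$, but the $j{+}3$ mandatory half-edges still consume $j{+}3$ old frontier vertices, so the net change in frontier size is $-(j{+}3)+(j{+}2)=-1$; the frontier shrinks and you can never reach the required width. If instead you attach a bold subfork across two frontier vertices $u,v$ of $L_i$ immediately after adding $L_i$, you gain nothing for leanness: the two subfork vertices $s_1,s_2$ themselves have degree $2$ in $G_{i+1}$, so later routing can still send two vertex-disjoint paths $u\text{--}s_1\text{--}\cdots$ and $v\text{--}s_2\text{--}\cdots$ out toward $V(G_{i-1})$, and the potential bound $j{+}4$ survives.

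The tension you perceive between widening the frontier and meeting the leanness cap is in fact illusory. The paper takes the $i$-th big-fork to be a bold $(i{-}1)$-big-fork with exactly $i{+}2$ half-edges, which equals the current frontier size; the half-edges therefore saturate \emph{every} degree-$2$ vertex of $G_{i-1}$. Consequently no vertex of $V(G_{i-1})$ is incident to any edge of $E(G)\setminus E(G_i)$, so the number of $V(L_i)$--$V(G_{i-1})$ paths in $E(G)\setminus E(G_i)$ is zero, trivially at most $j{+}3$. The frontier still grows by one each step (from $i{+}2$ to $i{+}3$), so after $m$ steps you have $m{+}3$ simultaneous attachment points. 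Once the big-forks are stacked this way, all remaining additions are subforks and dots, which carry no leanness constraint, and the rest of your plan (inducedness via routing third edges into the padding, $2$-connectedness via ear-grafting) goes through.
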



In the following section, we explain basic technical tool of our reasoning, namely the hexagon graphs, 
introduced in \cite{jkl}.

\vspace{1ex}
\section{Hexagon graphs}
We refer to the complete bipartite graph $K_{3,3}$ as a \emph{hexagon} and
say that a bipartite graph $H$ has a hexagon $h$ if $h$ is a subgraph of $H$.
For a graph $G$ and a vertex $v$ of $G$, let $N_G(v)$ denote the set of neighbors of $v$ in $G$.

Let $G$ be a cubic graph with vertex set $V$ and edge set $E$.
A hexagon graph of $G$ is a graph $H$ obtained from $G$ following the rules:
\begin{enumerate}
\item We replace each vertex $v$ in $V$ by a hexagon $h_v$ so that  
      for every pair $u$, $v \in V$, if $u \neq v$, then $h_u$ and $h_v$ are vertex disjoint.     Let  $\{V(h_v): v \in V \}$ be the vertex set of $H$.
\item For each vertex $v\in V$, let $\{v_i: i \in \mathbb{Z}_6\}$ denote the vertex set of $h_v$ 
      and $\{v_{i}v_{i+1}, v_{i}v_{i+3}: i \in \mathbb{Z}_6\}$ its edge set. 
      With each neighbor $u$ of $v$ in $G$, we associate an index $i_{v(u)}$ 
      from the set $\{0,1,2\} \subset \mathbb{Z}_6$ so that if 
      $N_G(v)=\{u,w,z\}$, then $i_{v(u)}$, $i_{v(w)}$, $i_{v(z)}$ are pairwise distinct. 
\item \label{def.hex_third} (See Figure~\ref{fig:hexagons}). Let $X= \cup_{v\in V} \{v_{2i} :i\in \mathbb{Z}_6\}$ and 
      $Y= \cup_{v\in V} \{v_{2i+1} :i\in \mathbb{Z}_6\}$.
      We replace each edge $uv$ in $E$ by two vertex disjoint edges $e_{uv}$, $\bar{e}_{uv}$ so that 
      if both $v_{i_{v(u)}}$, $u_{i_{u(v)}}$ belong to either $X$ or $Y$, then
      $e_{uv}= v_{i_{v(u)}} u_{i_{u(v)}+3}$, $\bar{e}_{uv}= v_{i_{v(u)}+3} u_{i_{u(v)}}$.
      Otherwise, 
      $e_{uv}= v_{i_{v(u)}} u_{i_{u(v)}}$, $\bar{e}_{uv}= v_{i_{v(u)}+3} u_{i_{u(v)}+3}$.
      Let $E(H)=\{E(h_v): v \in V\} \cup \{ e_{uv}, \bar{e}_{uv} : uv \in E\}$.
      \end{enumerate} 

\begin{figure}[h] 
 \centering 
 \ifpdf\input{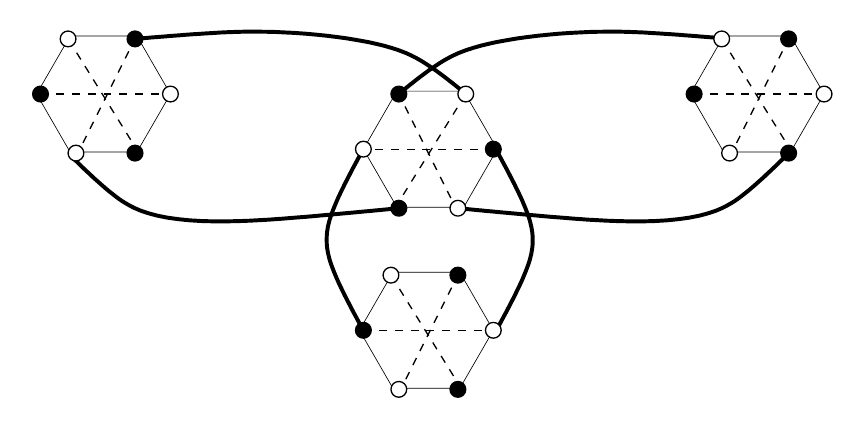_t}\else\input{hexagons.ps_t}\fi
 \caption{\small{Local representation of a hexagon graph $H$
 of a cubic graph $G$. The hexagon $h_v$ is associated with vertex $v$, where $N_{G}(v)=\{u,w,z\}$.
 Red edges are depicted as red lines, blue edges are depicted as blue lines and 
 white edges as black lines. The set $X$ is represented by white vertices
 and the set $Y$ by black vertices.}}
\label{fig:hexagons}
\end{figure}

We say that $h_v$ is the hexagon of $H$ associated with the vertex $v$ 
  of $G$ and that $\{h_v: v \in V \}$ is the \emph{set of hexagons of $H$}. 
We shall refer to the set of edges $\bigcup_{v\in V} \{v_{i}v_{i+3}: i\in\mathbb{Z}_6\}$ as the 
set of \emph{red edges of $H$}, to the set of edges $\{ e_{uv}, \bar{e}_{uv} : uv \in E\}$
as the set of \emph{white edges of $H$}, and finally to the set of edges
$\bigcup_{v\in V} \{v_{i}v_{i+1}: i\in\mathbb{Z}_6\}$ as the set of \emph{blue edges of $H$}
 (see Figure~\ref{fig:hexagons}). Moreover, we shall 
 say that a perfect matching of $H$ containing only blue edges is a \emph{blue perfect matching}.  
In the rest of this work, if $x$ is a vertex, say $v_i$, of a hexagon, then $\bar{x}$ denotes $v_{i+3}$.

 Let $G$ be a cubic graph and $H$ be a hexagon graph of $G$. We observe two important properties: 
(i) $H$ is bipartite, and (ii) if $H'$ is another hexagon graph of $G$, then $H$ and $H'$ are isomorphic.

In the next paragraphs we briefly recall a combinatorial representation of 
  embedding of graphs on closed orientable surfaces, namely \emph{rotation systems}, 
  and describe the embeddings encoded by the blue perfect matchings. 
  
Let $G$ be a graph. For each $v \in V(G)$, let $\pi_{v}$ be a cyclic permutation of the
edges incident with~$v$. 
A collection $\pi {=} \{\pi_{v} : v \in V(G)\}$ is called a 
\emph{rotation system} of $G$. Edmonds~\cite[\S 3.2]{opac-b1131924} proved 
that each such a $\pi$ encodes an embedding of $G$ 
on a closed orientable surfaces with face boundaries 
$e_1e_2 \cdots e_k$ such that $e_i{=}v^iv^{i+1} \in E(G)$, 
$\pi_{v^{i+1}}(e_i)= e_{i+1}$ 
$e_{k+1}=e_1$ and $k$ minimal. 
 
Let $M$ be a blue perfect matching of $H$ and let $W$ be the set of white edges of $H$. 
Each cycle $C$ in $M\Delta W$ induces a subgraph in $G$ defined by the set of edges
$ \{uv \in E(G): e_{uv} \in C \, \text{or} \,\, \bar{e}_{uv} \in C\}$.
The following lemma follows via a natural bijection between blue perfect matchings and rotation systems.

\begin{lemma}\label{lemma:hexagon-embedding} 
Let $G$ be a cubic graph, $H$ the hexagon graph of $G$, and $W$ the set of white edges of $H$.
Each blue perfect matching $M$ of $H$ encodes 
an embedding of $G$ on a closed orientable surface with set of face boundaries
the set of subgraphs of $G$ induced by the cycles in $M\Delta W$. 
Moreover, the converse also holds. 
\end{lemma}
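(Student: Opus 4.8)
The plan is to establish the bijection between blue perfect matchings of $H$ and rotation systems of $G$, and then verify that the embedding encoded by a rotation system has exactly the claimed face boundaries.

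\textbf{Step 1: Reading a rotation system off a blue perfect matching.}
First I would fix a vertex $v$ of $G$ with $N_G(v)=\{u,w,z\}$ and examine the hexagon $h_v$ in isolation. The blue edges of $h_v$ form the $6$-cycle $v_0v_1v_2v_3v_4v_5v_0$, which admits exactly two perfect matchings, namely $\{v_0v_1,v_2v_3,v_4v_5\}$ and $\{v_1v_2,v_3v_4,v_5v_0\}$. I would argue that a blue perfect matching $M$ of $H$ restricts to a perfect matching of each hexagon $h_v$ (since blue edges live entirely inside hexagons, and every vertex of $h_v$ must be matched). Thus $M$ is precisely a choice, independently at each vertex $v$, of one of these two $3$-matchings of the hexagon $6$-cycle. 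The key observation is that each of the two choices picks out one of the two cyclic orderings of the three ``ports'' $i_{v(u)},i_{v(w)},i_{v(z)}\in\{0,1,2\}$ around $v$; I would make this correspondence explicit and thereby define a map from blue perfect matchings $M$ to rotation systems $\pi(M)=\{\pi_v\}_{v\in V(G)}$, and conversely. Checking that this map is a bijection is then immediate, since both sides are products over $v\in V(G)$ of two-element sets.

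\textbf{Step 2: Identifying the cycles of $M\,\Delta\,W$ with face-tracing.}
Next I would show that the connected components of the symmetric difference $M\,\Delta\,W$ (where $W$ is the set of white edges) are exactly the closed walks that trace the face boundaries of the embedding determined by $\pi(M)$. Since each vertex of $H$ is incident to exactly one blue edge in $M$ and, by the construction in rule~\ref{def.hex_third}, to exactly one white edge, the graph $M\cup W$ is $2$-regular, so $M\,\Delta\,W=M\cup W$ decomposes into vertex-disjoint cycles alternating between blue and white edges. I would trace such a cycle: a white edge $e_{uv}$ or $\bar e_{uv}$ carries us from hexagon $h_u$ to hexagon $h_v$ (this is the ``travel along the edge $uv$ of $G$'' step), and the blue matching edge at the arrival vertex of $h_v$ carries us to the next port of $h_v$, which — by the way Step 1 encodes $\pi_v$ — is precisely the port of the next edge $\pi_v(uv)$ in the rotation at $v$. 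Comparing this with Edmonds' face-tracing rule $\pi_{v^{i+1}}(e_i)=e_{i+1}$ shows that successive white edges visited along a cycle $C$ of $M\,\Delta\,W$ correspond exactly to successive edges $e_1e_2\cdots e_k$ of a face boundary of the embedding given by $\pi(M)$. Hence the set of edges $\{uv\in E(G): e_{uv}\in C \text{ or } \bar e_{uv}\in C\}$ is exactly the subgraph of $G$ induced by one face boundary.

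\textbf{Step 3: Assembling the two directions and the main obstacle.}
Combining Steps~1 and~2, I would conclude that $M\mapsto\pi(M)$ sends a blue perfect matching to a rotation system whose induced face boundaries are exactly the subgraphs of $G$ induced by the cycles of $M\,\Delta\,W$; this gives the forward direction of the lemma. For the converse, I would run the bijection of Step~1 backwards: given any embedding of $G$ on a closed orientable surface, Edmonds' theorem provides a rotation system $\pi$, and Step~1 produces the blue perfect matching $M$ with $\pi(M)=\pi$, whose difference $M\,\Delta\,W$ then reproduces the given face boundaries by Step~2. The main obstacle I anticipate is purely bookkeeping rather than conceptual: in rule~\ref{def.hex_third} the white edges are defined by a case split according to whether the two endpoints both lie in $X$, both in $Y$, or in different classes, with a corresponding shift by $3$ in the index. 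I would need to verify carefully that, in every case, the blue-white alternating traversal of $M\,\Delta\,W$ advances the port index consistently with the cyclic orientation of the surface, so that the orientation chosen at $v$ in Step~1 agrees globally and the traced walks close up into the correct face boundaries; this is where the bipartite structure of $H$ and the $X/Y$ coloring must be reconciled with the $+3$ shifts, and it is the one place where a sign or parity error would break the bijection.
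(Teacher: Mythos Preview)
Your proposal is correct and follows exactly the approach the paper indicates: the paper offers no detailed proof, stating only that the lemma ``follows via a natural bijection between blue perfect matchings and rotation systems,'' and your Steps~1--3 spell out precisely that bijection and the resulting identification of $M\Delta W$-cycles with Edmonds' face-tracing walks. The bookkeeping obstacle you flag (reconciling the $+3$ index shifts with the bipartition $X/Y$) is real but routine, and is the only content the paper leaves implicit.
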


In~\cite{jkl}, we establish the following approach to the directed cycle double cover conjecture.

\begin{proposition}\label{prop:dualloops}
Let $G$ be a cubic graph, $H$ the hexagon graph of $G$, $M$ a blue 
perfect matching of $H$, and $W$ the set of white edges of $H$.
 The embedding of $G$ encoded by $M$ has a dual loop if and only if there is a cycle
 in $M\Delta W$ that contains  both end-vertices of a red edge.
\end{proposition}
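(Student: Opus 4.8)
The plan is to exploit the degree structure of $H$ to turn the statement into purely combinatorial bookkeeping about which white edges share a cycle of $M\Delta W$. First I would record that every vertex of $H$ has degree exactly four: inside a hexagon $h_v$ the vertex $v_i$ is joined by blue edges to $v_{i-1},v_{i+1}$ and by a red edge to $v_{i+3}$, while the three indices $i_{v(u)},i_{v(w)},i_{v(z)}$ being exactly $\{0,1,2\}$ forces each of $v_0,\dots,v_5$ to receive exactly one white edge. Consequently $M$ (one blue edge per vertex) and $W$ (one white edge per vertex) are disjoint, so $M\Delta W=M\cup W$ is a vertex-disjoint union of cycles that alternate blue and white edges and cover $V(H)$. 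By Lemma~\ref{lemma:hexagon-embedding} each such cycle $C$ projects to a face boundary of the embedding encoded by $M$: its white edges read off the edges of $G$ traversed, and its blue edges inside $h_v$ record the passages through the vertex $v$.

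Next I would make the notion of dual loop combinatorial. A dual loop is an edge of $G$ whose two sides belong to the same face, that is, an edge traversed twice by a single face boundary. The pair $e_{uv},\bar e_{uv}$ is exactly the pair of white edges that $H$ assigns to the two sides of $uv$, so each appearance of $uv$ in a face boundary corresponds to one of $e_{uv},\bar e_{uv}$ lying in the associated cycle. Since $M\cup W$ covers every white edge once, $uv$ is covered exactly twice overall, and it produces a dual loop precisely when its two copies $e_{uv}$ and $\bar e_{uv}$ lie in the same cycle of $M\Delta W$.

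It then remains to rewrite the same-cycle condition in terms of red edges. Writing $i=i_{v(u)}\in\{0,1,2\}$, a direct check of both cases in rule~\ref{def.hex_third} shows that on the $v$-side $e_{uv}$ is incident to $v_i$ and $\bar e_{uv}$ to $v_{i+3}$; these are precisely the two end-vertices of the red edge $v_iv_{i+3}$, and conversely every red edge of $h_v$ arises in this way from a unique neighbor. Because each vertex of $H$ lies on exactly one cycle of $M\cup W$ and uses its unique white edge there, we have $v_i\in C\iff e_{uv}\in C$ and $v_{i+3}\in C\iff \bar e_{uv}\in C$. Hence $C$ contains both copies $e_{uv},\bar e_{uv}$ iff $C$ contains both end-vertices of the red edge $v_iv_{i+3}$. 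Combining this with the previous paragraph yields the desired equivalence: the embedding encoded by $M$ has a dual loop iff some cycle of $M\Delta W$ contains both end-vertices of a red edge.

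The only genuinely delicate step is the middle one: identifying $\{e_{uv},\bar e_{uv}\}$ with the two sides of $uv$, and thereby equating the existence of a dual loop with both copies lying in a single cycle. This requires unwinding the rotation-system encoding underlying Lemma~\ref{lemma:hexagon-embedding}, in particular verifying that the two blue perfect matchings of $h_v$ correspond to the two cyclic rotations at $v$ and that a face walk revisits $uv$ exactly when both white copies are routed into one cycle. Once this is in place, the index computation linking white copies to red-edge end-vertices is routine.
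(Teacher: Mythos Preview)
The paper does not actually prove Proposition~\ref{prop:dualloops}; it merely quotes it from the earlier work~\cite{jkl} (``In~\cite{jkl}, we establish the following approach\ldots''), so there is no in-paper proof to compare against.

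That said, your argument is correct and is essentially the natural one. The only remark is about the passage you yourself flag as delicate. Identifying $\{e_{uv},\bar e_{uv}\}$ with the two sides of $uv$ is cleanest if you use the bipartition $X,Y$: the two vertices of $h_v$ carrying the white edges of $uv$ lie in opposite classes (one is $v_i$ with $i\in\{0,1,2\}$, the other $v_{i+3}$), and along any alternating blue--white cycle of $M\cup W$ the bipartition class tells you the direction in which the corresponding edge of $G$ is being traversed. Thus $e_{uv}$ and $\bar e_{uv}$ encode the two oppositely oriented traversals of $uv$, which is exactly what ``two sides'' means for a face walk in an orientable embedding. With this observation in hand, the rest of your reduction (both white copies in one cycle $\Leftrightarrow$ both ends of the red edge $v_iv_{i+3}$ in one cycle) is, as you say, routine bookkeeping, and the equivalence with the existence of a dual loop follows directly from Lemma~\ref{lemma:hexagon-embedding}.
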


In the same work~\cite{jkl}, we prove the following structural result
regarding hexagon graphs. We recall that braces, along with bricks, form the basic building blocks
 of the perfect matching decomposition theory~\cite{MR904405}.

\begin{theorem}
\label{thm.brace}
Let $G$ be a cubic graph. Then the hexagon graph $H$ of $G$ is a brace if and only if $G$ is bridgeless.
\end{theorem}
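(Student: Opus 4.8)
The plan is to work from the classical (Lov\'asz--Plummer) characterization of braces: a connected bipartite graph with parts $X$ and $Y$ of equal size at least $2$ is a brace if and only if every pair of independent edges extends to a perfect matching; equivalently, for every nonempty proper subset $S\subsetneq X$ one has either $N_H(S)=Y$ or $|N_H(S)|\geq|S|+2$ (and symmetrically for $S\subseteq Y$). The excerpt already records that $H$ is bipartite with parts $X$ and $Y$, each of size $3|V|$, and $H$ is connected exactly when $G$ is, since the hexagons are internally connected (each $h_v$ is a $K_{3,3}$) and are joined according to the edges of $G$; so I assume $G$ connected throughout and prove the two implications separately.

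For the implication ``brace $\Rightarrow$ bridgeless'' I would argue by contraposition. Suppose $uv$ is a bridge, splitting $G$ into the $u$-side $G_1$ and the $v$-side $G_2$, and let $A$ be the set of vertices of $H$ lying in the hexagons of $V(G_1)$. Because $uv$ is a bridge, the only edges of $H$ between $A$ and $V(H)\setminus A$ are the two white edges $e_{uv},\bar{e}_{uv}$, and by the definition of white edges these attach on the $A$-side to the two poles $u_{k},u_{k+3}$ of a red edge of $h_u$, one lying in $X$ and one in $Y$. I then take $f_1=e_{uv}$ together with an internal blue edge $f_2$ of $h_u$ covering the other pole; the edges $f_1,f_2$ are independent, and any perfect matching containing them matches both poles and so cannot use $\bar{e}_{uv}$. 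Such a matching would cross the cut in exactly one edge, which is impossible since $|A|$ is even; hence $\{f_1,f_2\}$ does not extend and $H$ is not a brace.

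For the converse ``bridgeless $\Rightarrow$ brace'' I would verify the expansion bound. Fix $\emptyset\neq S\subsetneq X$ with $N_H(S)\neq Y$, write $S_v=S\cap V(h_v)$, and let $U=\{v:S_v\neq\emptyset\}$. The key local fact is that each $h_v$ is a $K_{3,3}$, so whenever $S_v\neq\emptyset$ its internal edges already place all three $Y$-vertices of $h_v$ into $N_H(S)$; thus $|N_H(S)|\geq 3|U|$, and with the deficiency $d=3|U|-|S|=\sum_{v\in U}(3-|S_v|)\geq 0$ the bound $|N_H(S)|\geq|S|+d$ disposes of every case with $d\geq 2$. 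In the remaining cases $d\in\{0,1\}$, bridgelessness enters: since $N_H(S)\neq Y$, the set $U$ is a proper subset of $V$, so $2$-edge-connectivity of $G$ produces at least two edges crossing from $U$ to $V\setminus U$. Each crossing edge $u'w'$ yields two white edges whose $X$-endpoint in $h_{u'}$ reaches a fresh $Y$-vertex inside the untouched hexagon $h_{w'}$, and distinct crossing edges reach distinct such vertices because the indices $i_{w'(\cdot)}$ are pairwise distinct. When $d=0$ every hexagon of $U$ is full on its $X$-side, so both crossing edges contribute fresh neighbours and $|N_H(S)|\geq 3|U|+2=|S|+2$. When $d=1$ a single hexagon $h_{v_0}$ of $U$ misses one $X$-vertex; using that the three incident $G$-edges of $v_0$ correspond bijectively to the three $X$-vertices of $h_{v_0}$, at most one crossing edge can attach at the missing vertex, so at least one crossing edge still attaches at an $S$-vertex and yields a fresh neighbour, giving $|N_H(S)|\geq(|S|+1)+1=|S|+2$.

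The easy direction and the global set-up are routine; the step I expect to be most delicate is the bookkeeping in the backward direction when $d=1$, where one must rule out the possibility that all crossing edges of $G$ leave the single deficient hexagon through its one missing $X$-vertex. Here the precise correspondence built into the hexagon-graph construction between the neighbours of $v$ in $G$ and the $X$-vertices of $h_v$ (the injectivity of the indices $i_{v(\cdot)}$) is exactly what makes the argument close, and it is essential that $G$ be $2$-edge-connected rather than merely connected, matching the ``$+2$'' in the brace condition.
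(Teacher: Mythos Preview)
The paper does not actually prove Theorem~\ref{thm.brace} here; the result is quoted from the companion work~\cite{jkl}, so there is no in-paper argument to compare against. Your proof, however, is correct and is the natural approach via the Hall-type expansion criterion for braces: the forward direction is a clean parity argument across the two-edge cut induced by a bridge, and the backward direction is organized exactly as one would expect, splitting on the deficiency $d=3|U|-|S|$.

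Two minor points worth tightening. In the $d=1$ step you are really using two facts at once: (i) the three $G$-edges at the single deficient vertex $v_0$ correspond bijectively to the three $X$-vertices of $h_{v_0}$ (via $i_{v_0(\cdot)}$), so at most one crossing edge incident to $v_0$ can land on the missing $X$-vertex; and (ii) any crossing edge whose $U$-endpoint is some $u'\neq v_0$ automatically has its $X$-end in $S$, since $h_{u'}$ is full on the $X$-side. Together these guarantee that at least one of the $\geq 2$ crossing edges contributes a fresh $Y$-neighbour outside the $U$-hexagons; your sentence ``at most one crossing edge can attach at the missing vertex'' should be read as covering both (i) and (ii), not just edges at $v_0$. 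Second, the claim that distinct crossing edges reach distinct fresh $Y$-vertices tacitly uses that $G$ is simple: two crossing edges sharing the non-$U$ endpoint $w'$ then come from distinct $u'$'s, hence receive distinct indices $i_{w'(\cdot)}$ and hit distinct $Y$-vertices of $h_{w'}$. This is consistent with the paper's set-up but is worth stating.
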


The following section (Section~\ref{sec.proofthe2and3}) is divided into two parts: 
the first one shows how Theorem~\ref{thm.main1} implies Theorem~\ref{thm.main2}
and in the second part we prove the statement of Theorem~\ref{thm:contsubofcub}.
Finally, in Section~\ref{sec:prooftheo1} we present the proof of Theorem~\ref{thm.main1}
in the context of hexagon graphs.

\section{DCDC and richness of lean fork graphs}\label{sec.proofthe2and3}
In this section we show how Theorem~\ref{thm.main1} implies Theorem~\ref{thm.main2} 
and  discuss the proof of Theorem~\ref{thm:contsubofcub}.

\subsection*{Proof of Theorem~\ref{thm.main2}} 
\label{sub.2}
Let $G$ be a lean fork-graph. 
Hence, there is a sequence  $G_0, \ldots, G_n$ of 
2-connected graphs such that $G_0$ is a triangle, $G_n= G$, and  
for $i\le n$, $G_{i}$ is constructed from $G_{i-1}$ by adding a bold $L_i$, where 
$L_i$ is a member of the exclusive fork-collection and it is the fork
at most once.

Using Theorem \ref{thm.main1}, we only need to show that in subsequent safe 
reductions of vertex sets of bold $L_i$'s, we do not create a cut-obstacle formed by the vertex set of an added bold $j$-big-fork,
for some~$j\geq 1$. 

We assume, for the sake of contradiction, that 
for some $ i\le n$ and $j\geq 1$, $G_{i}$ is obtained from $G_{i-1}$ by addition of a 
bold $j$-big-fork $B$ and $V(B)$ is a cut-obstacle in $G'_{i}$,
where $G'_{i}$ denotes a mixed graph obtained by safely reducing $V(G){-}V(G_i)$.
This implies 
that the number of directed edges in $G'_i$ with exactly one end-vertex in $V(B)$ is more 
than $2(j{+}3)$. Hence, this number is exactly $2(j{+}4)$, because a bold $j$-big-fork has at most $j{+}4$ vertices of 
degree 2 in $G_{i}$ (see Figure~\ref{fig.bigfork} for $j=1$).


These $2(j{+}4)$ directed edges are obtained by the reduction of $V(G){-}V(G_{i})$. 
Let $G'$ be the graph obtained from $G$ by deleting all edges in $E(G_{i})$.
By definition of safe reductions, the digraph $D$
obtained from $G'$ by replacing each edge by two oppositely directed edges, has $2(j{+}4)$ directed
edge-disjoint paths between $V(B)$ and $V(G_{i-1})$. 
Hence, $D$ has no set of strictly less than $2(j{+}4)$ directed edges
which completely separates $V(B)$ from $V(G_{i-1})$. Consequently, $G'$ has no set of at most
 $j{+}3$ edges which completely separates $V(B)$ from $V(G_{i-1})$. 
But this means, by the Menger's theorem, that $G'$ has at least $j{+}4$ edge-disjoint paths between $V(B)$
and $V(G_{i-1})$. Since each vertex of $G'$ has degree at most three, these paths are also vertex disjoint. This contradicts 
the assumption that $G$ is lean. \hfill $\square$

\subsection*{Proof of Theorem~\ref{thm:contsubofcub}} 
\label{sub.3}

Given a cubic bridgeless graph $G$, the construction of the lean fork-graph $\tilde{G}$
that contains a subdivision of $G$ as an induced subgraph can be split into two steps:
\begin{description}
 \item[Step 1:] We create a lean fork-graph $\tilde{G}^1$ with arbitrarily many vertices of degree 2:
 we perform this task by constructing the lean fork-graph with defining sequence 
 $\tilde{G}^1_0, \tilde{G}^1_1, \ldots, \tilde{G}^1_m$,
 where $\tilde{G}^1_0$ is the triangle, $\tilde{G}^1_1$ is obtained from $\tilde{G}^1_0$ by adding 
 a bold fork and for each $i > 1$, $\tilde{G}^1_i$ is obtained from $\tilde{G}^1_{i-1}$
 by adding a bold $(i{-}1)$-big-fork. We note that $\tilde{G}^1$ is a lean fork-graph
 with exactly $m+3$ vertices of degree two.
 
  \item[Step 2:] In the second step we obtain $\tilde{G}$ from $\tilde{G}^1$ by sequentially adding bold subforks and 
   bold dots following an ear-decomposition of $G$.   
   Let~$(G_0, G_1, \ldots, G_l; P_1, \ldots, P_l)$ be an ear decomposition of $G$,
   where $G=G_l$, $G_0$ is a cycle of $G$ and  
   $G_{i}$ is obtained from $G_{i-1}$ connecting two vertices of $V(G_{i-1})$ 
   by a path $P_i$ such that $E(P_i) \cap E(G_{i-1}) = \emptyset$ 
   and $|V(P_i) \cap V(G_{i-1})|=2$, for each $i \in \{1, \ldots,l\}$. In order to obtain
   $\tilde{G}$ from $\tilde{G}^1$ with the property that $\tilde{G}$  has
   a subdivision of $G$ as an induce subgraph, we first obtain $\tilde{G}_0$
   from $\tilde{G}^1$ with the property that $\tilde{G}_0$ contains a subdivision of $G_0$ and 
   then for each  $i \in \{1, \ldots,l\}$, we obtain  $\tilde{G}_i$ from  $\tilde{G}_{i-1}$
   with the property that $\tilde{G}_i$ contains a subdivision of $G_i$. 
   This procedure is best explained by means of an example, see Figure~\ref{fig.spider}.   
   Since $\tilde{G}^1$ is lean and by the construction of $\tilde{G}$ from $\tilde{G}^1$, 
   we have that $\tilde{G}$ is a lean fork-graph.  \hfill $\square$

\begin{figure}[h]
\centering
\ifpdf\input{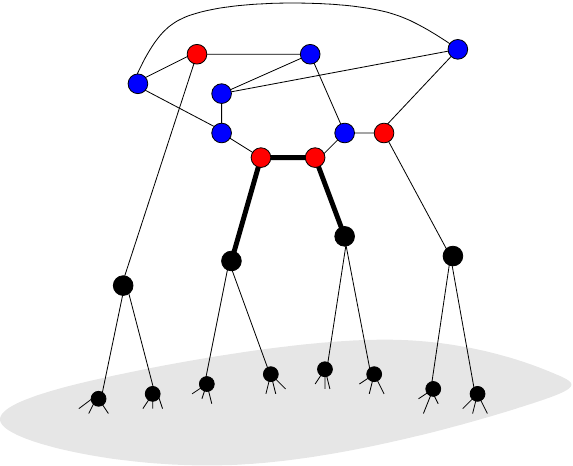_t}\else\input{spider.ps_t}\fi
\caption{Construction of $\tilde{G}$ from $\tilde{G}^1$ in the case that $G$ 
is the complete bipartite graph on 6 vertices. The vertex set of $G$ is colored by blue and the
vertices produced by the subdivisions are colored by red.
Moreover, note that $L_1, L_2, L_3, L_4$ are subforks and $L_5, L_6$ are dots.}
\label{fig.spider}
\end{figure}
\end{description}

\section{Proof of Theorem~\ref{thm.main1}}

\label{sec:prooftheo1}

In this work, we perform safe reductions of subsets of vertices of cubic graphs.
Recall that this leads us to the notion of mixed graphs (see Section~\ref{sec:ideasandresults}).
We suggest to study these reductions in the context of hexagon graphs.
Then it is necessary to describe mixed graphs in terms of hexagon graphs.
We refer to these new structures as \emph{pseudohexes}, and introduce
them and the corresponding concept of \emph{safe reductions of pseudohexes} in the following paragraphs.

The statement of Theorem \ref{thm.main1} follows from Lemma~\ref{lemma:corrsafred}
and Theorems~\ref{thm.fearr}, \ref{theo.sub-fork-ear}, \ref{thm.j-bigforks}, \ref{theo.p-fork}.

\subsection{Pseudohexes} 
Let $K$ be a bipartite graph with vertex set $V(K)$ and edge set $E(K)$ such that 
edges are colored blue, red and white, and there may be parallel edges but no loops. 
We let $B(K)$ denote the set of its blue edges, $R(K)$ the set of its red edges, and  
$W(K)$ the set of its white edges. We call $K$ {\em pseudohex} if it is empty or the 
following three properties are satisfied:
\begin{itemize}
\item
$B(K)$ induces disjoint hexagons covering all the vertices of $K$; we call the set of hexagons induced
by $B(K)$ the hexagons of $K$. 
\item
$K$ contains all the edges $\{x,\bar{x}\}$ for each vertex $x$ of $K$; all these edges are red. 
Moreover, $K$ may have additional red edges. 
\item
$W(K)=E(K) {-} (B(K)\cup R(K))$ is a perfect matching of $K$.
\end{itemize}

We still need to introduce some extra definitions.
A white edge $e = \{x,y\} \in W(K)$ is said to be {\em real} if $K$ contains 
also the white edge  $\bar{e}= \{\bar{x},\bar{y}\}$
and the only red edges adjacent to $e$ or $\bar{e}$ 
are $\{x,\bar{x}\}$ and  $\{y,\bar{y}\}$.
Clearly $e$ is real if and only if $\bar{e}$ is real. 
We denote by $W_r(K)$ the set
of the real white edges of $K$. 
In addition, we say that the white edges in  $W_d(K)=W(K) {-} W_r(K)$ are 
{\em derived} and that two white edges are {\em red-connected} if there is a red edge adjacent to both of them.

We remark that pseudohexes correspond to mixed graphs; the set of hexagons of
a pseudohex corresponds to the vertex set of the mixed graph, the sets of the real white
edges and the derived white edges correspond to the sets of the edges and the arcs, respectively,
of the mixed graph. 
Moreover, red-connected pairs of derived white edges correspond 
to pairs of arcs in the set $R$ introduced in the definition of mixed graphs.

\begin{definition}[Reduction of a hexagon] \label{def:reductofh}
Let $K$ be a pseudohex, $h$ a hexagon of $K$ and $N$ a perfect matching of $h$. We define the 
{\em reduction} of $h$ by $N$ as follows.
For each of the three paths $P_i, i=1,2,3$, consisting of one edge of $N$ and the white edges adjacent 
to this edge we introduce a new white edge $e_i$ between the end-vertices of $P_i$
whenever $P_i$ is not a cycle of length 2. If $P_i$ is not a cycle of length 2, then 
for each red edge $\{u,w\}$ attached to an interior vertex 
$w$ of $P_i$ we introduce new red edge $\{u,w'\}$ where $w'$ is the vertex of $e_i$ of the same bipartition 
class of $K$ as $w$. If $P_i$ is a cycle of length 2, then for each red edge $\{u,w\}$ attached to an interior vertex 
$w$ of $P_i$ we introduce a new red edge $\{u,\bar{u}\}$.
Finally, we delete the vertices of $h$.
The paths $P_i,  i= 1,2,3$, are said to be \emph{contracted}. 
\end{definition}

We observe that a reduction never creates new real white edges.
We say that a set of hexagons $\{h_1, \ldots, h_l\}$ can be {\em safely reduced} (or the reduction is {\em safe}) 
if there are perfect matchings $N_1, \ldots, N_l$ of $h_1, \ldots, h_l$, respectively, such that
each contracted path of the joint reduction of $h_1, \ldots, h_l$ by $N_1, \ldots, N_l$, respectively,  
has at most one white edge which is not real, in other words 
at most one derived white edge. 
The next statement follows from the facts that the set of derived white edges in pseudohexes is equal
to the set of the arcs in mixed graphs and that the reduction of a hexagon by a perfect matching in a pseudohex
corresponds to wiring a vertex in a mixed graph.

\begin{lemma}\label{lemma:corrsafred}
A safe reduction in a pseudohex corresponds to safe reduction in a mixed graph.
\end{lemma}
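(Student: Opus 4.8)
The plan is to show that the reduction of a pseudohex is, under the correspondence recorded just before Lemma~\ref{lemma:corrsafred}, literally the wiring operation on the associated mixed graph, and that the two notions of safety coincide term by term; so the proof is an unwinding of the definitions rather than a new construction. First I would recall the correspondence precisely: a hexagon $h$ of the pseudohex $K$ corresponds to a vertex $v$ of the mixed graph; the real white edges incident to $h$ correspond to the edges of $E$ incident to $v$; the derived white edges incident to $h$ correspond to the arcs of $A$ incident to $v$; and red-connected pairs of derived white edges correspond to the pairs recorded in $R$. Since the blue edges of $K$ induce a hexagon $K_{3,3}$ and the white edges form a perfect matching of $K$, every vertex of $h$ carries exactly one incident white edge, and a perfect matching $N$ of $h$ has exactly three edges. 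Each matching edge, together with the white edge at each of its two endpoints, forms one of the three paths $P_1,P_2,P_3$ of Definition~\ref{def:reductofh}. I would then observe that choosing $N$ is exactly the datum of how the three edges and arcs incident to $v$ are paired during wiring: the two white edges adjacent to a matching edge are precisely the two incident objects that get joined into one new object.

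Next I would trace edges versus arcs through the operation. When an edge of $E$ incident to $v$ is wired in the mixed graph it is first replaced by two oppositely directed arcs of $A'$; in the pseudohex this is mirrored by the fact that a real white edge $e$ always comes with its partner $\bar e$, so the pair $\{e,\bar e\}$ plays the role of the two arcs of $A'$, whereas an existing arc of $A_S$ corresponds to a single derived white edge. Consequently a contracted path $P_i$ containing at most one derived white edge corresponds exactly to a wired path or cycle containing at most one edge of $A_S$, which is the first clause of the safety condition in the mixed-graph definition. I would then match the degenerate cases: the alternative ``$P_i$ is a cycle of length $2$'' in the reduction of a hexagon corresponds precisely to the excluded ``$2$-cycle composed only of edges from $A'$'', since such a cycle is closed up purely by a real white edge together with its partner, i.e.\ by the two arcs replacing a single edge. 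Finally, the prescription in Definition~\ref{def:reductofh} for transferring red edges across a contraction (and for collapsing a red edge to $\{u,\bar u\}$ in the $2$-cycle case) reproduces exactly the update of $R$ and the deletion of $v$ that wiring performs, which is the content already asserted in the sentence preceding the lemma.

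Having fixed the single-hexagon case, I would pass to a set $\{h_1,\dots,h_l\}$ by noting that the reduction of each hexagon is local and that the joint reduction is the composition of these local operations; tracing an alternating white/matching walk across several hexagons yields precisely a directed path or cycle in $A_S\cup A'$. Hence a choice of perfect matchings $N_1,\dots,N_l$ making every contracted path of the joint reduction carry at most one derived white edge is, under the correspondence above, exactly a partition of $A_S\cup A'$ into safe directed paths and cycles in the sense of the mixed-graph definition, and conversely each such partition is induced by a unique family of matchings. This establishes the equivalence claimed in the lemma. The main obstacle I anticipate is the bookkeeping in the degenerate $2$-cycle cases and in the red-edge transfer: one must check in both directions that the exclusion conditions agree and that the forbidden-pair structure encoded by the red edges (equivalently by $R$) is preserved, so that no spurious safe reduction appears on either side; the remainder is a direct reading of the two definitions against each other.
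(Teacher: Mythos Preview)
Your proposal is correct and follows exactly the same approach as the paper: the paper simply asserts that the lemma ``follows from the facts that the set of derived white edges in pseudohexes is equal to the set of the arcs in mixed graphs and that the reduction of a hexagon by a perfect matching in a pseudohex corresponds to wiring a vertex in a mixed graph,'' and you have carried out in detail the definition-chasing that this sentence summarizes. Your treatment is considerably more thorough than the paper's, which offers no proof beyond that one line.
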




Let $K$ be a pseudohex.
It is natural to associate a graph $G^K$ with $K$. 
Recall that $K$ corresponds to a mixed graph.
The vertex set of $G^K$ is the set
of the hexagons of $K$. For $u, v$ vertices of $G^K$, the set $\{u, v\}$ is an edge
of $G^K$ if and only if there is a pair of real white edges connecting 
the hexagons $h_u, h_v$ in $K$. 
We know by definition of a pseudohex that each vertex of $G^K$  has degree at most three.
In other words, $G^K$ is the graph induced by the mixed graph that corresponds
to the pseudohex $K$.
A subgraph of $K$ is called {\em end} if it consists of a red edge parallel to a white edge. 
If $K$ contains an end as a subgraph we say that $K$ has an end.
We say that $K$ is {\em proper} if $K$ has no end and $G^K$ is a 2-connected graph without cycles of length~2.

We observe that any mixed graph obtained from a fork graph by a sequence of safe reductions
of set of vertices of bold members of the fork-collection can be represented by a proper pseudohex.
Therefore, in order to prove Theorem~\ref{thm.main1}, 
we are interested in obtaining safe reductions of set of hexagons of proper pseudohexes,
where the set of hexagons correspond to the vertex-set of the fork-type graphs.
In Subsections~\ref{subsec:technicalstuff1},~\ref{sec.subforks_subears} and~\ref{sub:bigfork}  
we study the aforementioned reductions.

\subsection{Reduction of forks and 3-ears on pseudohexes} \label{subsec:technicalstuff1}

The aim of this section is to study safe reductions of forks on pseudohexes.
However, we also establish results that are used to handle many of the proofs included in later sections.
For this sake, we introduce a new fork-type graph that we call the \emph{3-ear}.
The 3-ear consist of a path on three vertices. 
We refer to the union of the fork-collection and the 3-ear
as the \emph{extended} fork-collection.
The definition of a bold 3-ear is analogous to the one of bold fork, star fork and subfork.

Let $K, K'$ be proper pseudohexes such that $G^K$ is obtained from $G^{K'}$ by addition of 
a bold $L$, where $L$ is a member of the extended fork-collection.
We denote by $L_K$ the subset of the hexagons of $K$ corresponding to the vertices of $L$;
consequently, $V(L_K)$ denotes the set of the vertices of $K$ corresponding to the hexagons in $L_K$.
We say that $K$ is obtained from $K'$ by \emph{$L$-addition}. 

We refer to the subset of real white edges (derived white edges, respectively) of $K$  
with at least one end vertex in $V(L_K)$
as the {\em $L$-edges} ({\em $L$-no-edges}, respectively) of $K$.
A pair of $L$-no-edges is called {\em potential} 
if each edge of the pair is incident to exactly one vertex of $V(L_K)$,
and these two vertices belong to different bipartition classes of $K$.

In the case that $L$ is the fork or the 3-ear, 
we say that $K$ has a {\em $L$-obstacle} if each  
$L$-no-edge of $K$ is incident to exactly one vertex of~$V(L_K)$.
We remark that the notion of $L$-obstacle is analogous to the notion of cut-obstacle for mixed graphs.
Observation~\ref{o.obss} illustrates the concept of $L$-obstacles. 

\begin{observation}
\label{o.obss}
If a proper pseudohex $K$ has a $L$-obstacle and all potential pairs of $L$-no-edges are red-connected,
then each reduction of the hexagons corresponding to the vertices of $L$ creates an end.
\end{observation}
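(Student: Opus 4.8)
The plan is to analyze an arbitrary joint reduction of the hexagons of $L$ and to show that it is forced to produce a red edge parallel to one of the newly created white edges. Throughout, call a white edge of $K$ an \emph{exit} if it has exactly one end-vertex in $V(L_K)$; by the $L$-obstacle hypothesis every $L$-no-edge is an exit, while the remaining exits are real. First I would record the shape of a joint reduction: fix perfect matchings $N$ of the hexagons of $L$ and consider the subgraph $D$ formed by the edges of these matchings together with all white edges incident to $V(L_K)$. Since $W(K)$ is a perfect matching of $K$ and each hexagon vertex lies on exactly one matching edge, every vertex of $V(L_K)$ has degree $2$ in $D$, whereas each external end-vertex of an exit has degree $1$. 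Hence $D$ is a disjoint union of cycles contained in $V(L_K)$ and of paths whose two ends are external; these paths are exactly the contracted paths of the joint reduction, and each of them pairs up two exits while producing one new white edge joining their external ends.

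The heart of the mechanism is the following local claim, which I would prove by a short computation with the bipartition classes. Suppose a contracted path $P$ pairs two $L$-no-edges $f_1=\{x_1,p_1\}$ and $f_2=\{x_2,p_2\}$ (with $x_1,x_2\in V(L_K)$) forming a \emph{potential} pair, so that $x_1$ and $x_2$ lie in different classes; let $e=\{p_1,p_2\}$ be the new white edge produced by $P$. Tracing $P$ one finds $x_1$ in the class of $p_2$ and $x_2$ in the class of $p_1$. If $f_1,f_2$ are red-connected, the realizing red edge joins opposite classes, so it can only be $\{x_1,x_2\}$ or $\{p_1,p_2\}$ (the mixed options $\{x_1,p_2\}$ and $\{p_1,x_2\}$ join same-class vertices and are impossible). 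In the first case $x_1,x_2$ are interior vertices of $P$, and the reduction rule pushes $\{x_1,x_2\}$ onto $\{x_1',x_2'\}=\{p_2,p_1\}=e$; in the second case $\{p_1,p_2\}$ is untouched by the reduction and is already parallel to $e$. Either way a red edge parallel to the new white edge $e$ appears, that is, an end is created.

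It then remains to establish the structural fact that, for $L$ the fork or the 3-ear and under the $L$-obstacle, \emph{every} choice of matchings contracts some potential pair of $L$-no-edges together; this is the step I expect to be the main obstacle. The approach I would take is purely combinatorial on $D$: using that the fork and the 3-ear are fixed small graphs, so that the pattern of their internal edges — and hence of the internal real-white-edge pairs they contribute — is completely determined, and that by the obstacle no no-edge is internal, I would trace the possible pairings of exits realized by the contracted paths. A parity argument on the bipartite classes along each contracted path then shows that the no-edge exits cannot all be matched to real exits or to same-class no-edges, so at least one contracted path joins two no-edges whose $L$-ends lie in different classes, i.e.\ a potential pair. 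The bookkeeping here is finite but has to be carried out over the few admissible matchings of the hexagons of $L$, which is why this is the delicate part.

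Finally, combining the three steps closes the argument: by the structural step some potential pair of $L$-no-edges is contracted into a single path, by hypothesis this pair is red-connected, and by the local claim the corresponding contraction creates an end. As the choice of matchings was arbitrary, every reduction of the hexagons corresponding to the vertices of $L$ creates an end, as asserted.
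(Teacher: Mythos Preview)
Your approach mirrors the paper's: decompose the joint reduction into contracted paths, argue that some path must carry a potential pair of $L$-no-edges, and then use red-connectedness to produce an end. The difference is that you have inverted the relative difficulty of the two substantive steps. What you flag as ``the main obstacle'' is in the paper a one-line pigeonhole count, completely independent of which matchings are chosen. For the fork $F$, under the $F$-obstacle the set $S$ of exits consists of all eight $F$-no-edges together with six real white edges, so $|S|=14$ and the contracted paths number exactly seven; since $8>7$, some path carries two no-edge exits. As you already observed, the two exits of any contracted path have their $V(F_K)$-ends in opposite bipartition classes, so this pair is automatically potential. The analogous count for the $3$-ear gives six no-edges distributed over five paths. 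No enumeration of the hexagon matchings is needed, and nothing here is delicate.

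Conversely, your ``local claim'' --- that a red-connected potential pair on a single contracted path forces an end --- is worked out more carefully than in the paper, which simply asserts ``and thus $\tilde K$ has an end.'' Your bipartition analysis correctly reduces the realizing red edge to $\{x_1,x_2\}$ or $\{p_1,p_2\}$, and the push of an internal red edge to the external endpoints under the reduction rule is exactly the right mechanism. So your proposal is correct in outline; you just need to replace the promised case analysis by the count above.
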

\begin{proof}
We prove the observation for a $F$-obstacle, where $F$ is the fork. 
The proof of the statement in the case that $L$ is a 3-ear. 
Hence, we assume that $K$ has a $F$-obstacle and let $\tilde{K}$ denote the pseudohex 
obtained from $K$ by some reduction of 
$F_K$. 

Each new derived white edge of $\tilde{K}$ is obtained by contracting a path that contains two  
  white edges from the set $S$ of the white edges of $K$ with exactly one end-vertex in $V(F_K)$. 
Moreover, the end-vertices (contained in $V(F_K)$) of these two white edges 
belong to different bipartition classes of $K$.

The set $S$ is formed by all eight $F$-no-edges of $K$ and a subset consisting of six $F$-edges.
Hence, necessarily a potential pair of $F$-no-edges belong to the same contracted path of the reduction; 
  but each potential pair of $F$-no-edges of $K$ is red-connected, and thus $\tilde{K}$ has an end.
\end{proof}

We recall that the reduction of a hexagon is {\em safe} if each contracted path has at most one derived 
white edge (see Definition~\ref{def:reductofh}). We say that 
a pseudohex has a \emph{correct reduction} if it is possible to 
reduce all its hexagons without creating an end.  

The following converse of Observation~\ref{o.obss} is not difficult to prove.

\begin{observation}
\label{o.ob2}
Let $K$ be a proper pseudohex with a $L$-obstacle, where $L$ is either the fork, or the 3-ear.
If there exists a potential pair $e_1, e_2$ of $L$-no-edges of $K$ that are not red-connected
then, there is a reduction of $L_K$ that creates no end,
and such that all but one of the contracted paths contain at most one $L$-no-edge. 
Moreover, the contracted path that contains more
than one $L$-no-edge contains exactly two $L$-no-edges; namely, $e_1, e_2$.
\end{observation}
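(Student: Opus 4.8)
The plan is to combine a counting argument, which identifies how many contracted paths are \emph{forced} to carry two $F$-no-edges, with an explicit construction of perfect matchings of the hexagons of $F_K$ realising the optimal pairing. Throughout I treat $L=F$ the fork and indicate at the end the change needed for the $3$-ear. As in the proof of Observation~\ref{o.obss}, let $S$ be the set of white edges of $K$ with exactly one end-vertex in $V(F_K)$; the $F$-obstacle guarantees that $S$ consists of all eight $F$-no-edges together with six $F$-edges. Any reduction of $F_K$ (Definition~\ref{def:reductofh}) partitions these fourteen boundary edges into seven pairs, each pair being the two boundary edges of one contracted path, since both end-vertices of a contracted path lie outside $V(F_K)$ once every hexagon of $F_K$ is deleted. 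Writing $a$, $b$, $c$ for the number of pairs of type (no-edge, no-edge), (no-edge, $F$-edge) and ($F$-edge, $F$-edge), we have $2a+b=8$, $b+2c=6$ and $a+b+c=7$; these give $a=c+1\ge 1$, so \emph{every} reduction has at least one contracted path with two $F$-no-edges, and the minimum $a=1$ is attained only when $b=6$ and $c=0$.

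First I would use the hypothesis to fix which (no-edge, no-edge) pair is the forced one. Since $e_1,e_2$ form a potential pair, each is incident to exactly one vertex of $V(F_K)$ and these two vertices lie in different bipartition classes of $K$; hence $e_1$ and $e_2$ can legitimately be the two boundary edges of a single contracted path. I would then choose a perfect matching $N_h$ of each hexagon $h$ of $F_K$ so that the induced threading routes $e_1$ and $e_2$ into one common contracted path and routes each of the remaining six $F$-no-edges into a path whose other boundary edge is one of the six $F$-edges of $S$, that is, realising $a=1$, $b=6$, $c=0$. The available freedom is precisely the choice of one of the perfect matchings of each $K_{3,3}$ hexagon, and the argument here is the structural core: one verifies, by following the internal $F$-edges that link the hexagons of $F_K$ and using a short case analysis on the hexagons to which $e_1,e_2$ and the other boundary edges attach, that such a compatible family $\{N_h\}$ exists. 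This is the step I expect to be the main obstacle, since it is the only place where the concrete shape of the fork (and of the $3$-ear) enters, and it must be carried out so that no contracted path is a forbidden length-two cycle and so that the matchings on adjacent hexagons agree along the internal $F$-edges.

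Finally I would check that the reduction produced creates no end. The contracted path carrying $e_1,e_2$ produces a new white edge joining their outer end-vertices; an end would arise only if a red edge already joined these vertices, i.e.\ only if $e_1,e_2$ were red-connected, which is excluded by hypothesis, so this path is harmless. Every other contracted path carries at most one $F$-no-edge, hence at least one of its two boundary edges is a real $F$-edge; by the definition of a real white edge, the only red edges adjacent to a real white edge $\{x,y\}$ and its mate are $\{x,\bar{x}\}$ and $\{y,\bar{y}\}$, so contracting such a path introduces no red edge parallel to the new white edge, and since $K$ is proper it carries no end to begin with. Thus no end is created, all but one contracted path contains at most one $F$-no-edge, and the exceptional path contains exactly $e_1$ and $e_2$, as required. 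For $L$ the $3$-ear the boundary counts differ, but the same three-step scheme — counting to pin down the forced (no-edge, no-edge) pair, constructing matchings that realise $\{e_1,e_2\}$ as that pair, and ruling out ends via non-red-connectedness together with the real-edge red-adjacency condition — applies verbatim.
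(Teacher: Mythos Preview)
Your counting argument is correct and gives clean insight into why exactly one contracted path must carry two no-edges, and your no-end verification for the mixed paths is essentially right (the outer endpoint of a real boundary edge lies on a real white edge, hence carries only the canonical red edge $\{q,\bar q\}$, and $\bar q$ is again the outer endpoint of a real edge, so it cannot coincide with the derived outer endpoint $p$). But the proposal has a genuine gap at the point you yourself flag: the existence of a family of hexagon matchings that routes $e_1,e_2$ onto a common path while pairing every remaining no-edge with a real boundary edge is asserted, not established. This is the entire content of the observation; without it you have only shown what a good reduction \emph{would look like}, not that one exists. Not every pairing of the fourteen boundary edges is realisable by a choice of perfect matchings on the five hexagons, so a case analysis (on the hexagons carrying the inner endpoints of $e_1,e_2$) really must be carried out.

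The paper's proof is organised differently and is worth comparing. It treats the $3$-ear $P$ first: up to symmetry there are only two positions for the pair $e_1,e_2$ (both inner endpoints in $V(h_x\cup h_y)$, or one in $V(h_x)$ and one in $V(h_z)$), and explicit matchings are exhibited for each. For the fork $F$ it does \emph{not} attack all five hexagons at once, but first safely reduces the two ``extra'' hexagons $h_a,h_b$ by one of the four combinations $M_a/N_a \times M_b/N_b$, producing a $P$-obstacle on $h_x,h_z,h_y$, and then invokes the $3$-ear case. This reduction-to-$3$-ear strategy keeps the case analysis short (three cases on whether $e_1,e_2$ touch $V(h_x\cup h_y)$, $V(h_a\cup h_b)$, or both) and explains why the $3$-ear is isolated as a separate member of the extended fork-collection. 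Your direct approach could in principle be completed, but you would be redoing the work the paper avoids by this modular reduction.
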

\begin{proof}
We prove the observation first for a $P$-obstacle, where $P$ is the 3-ear. 
We assume that there exists a potential pair $e_1, e_2$ of $P$-no-edges of $K$ that are not red-connected.
Without loss of generality two cases arise: the case that $e_1$ and $e_2$ have end vertices in 
  $V(h_x \cup h_y)$ and the case that $e_1$ has an end vertex in $V(h_x)$
  and $e_2$ has an end vertex in $V(h_z)$. Both cases are worked out in Figure~\ref{fig.4}.
\begin{figure}[h]
\centering
\subfigure[$e_1, e_2$ have end-vertices in $V(h_x \cup h_y)$.]
{
\ifpdf\input{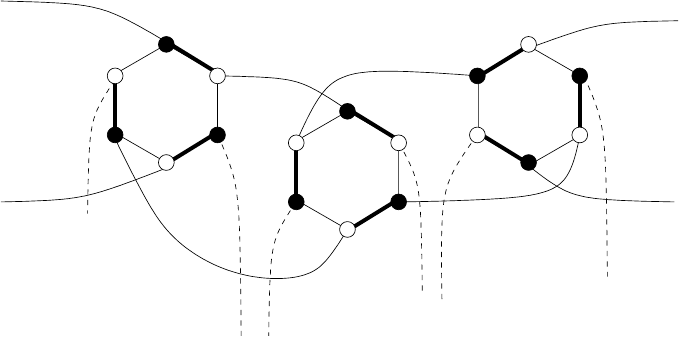_t}\else\input{4-1-new.ps_t}\fi
 \label{fig.41}
}\qquad
\subfigure[$e_1$ has end-vertex in $V(h_x)$
and $e_2$ in $V(h_z)$.]
{
\ifpdf\input{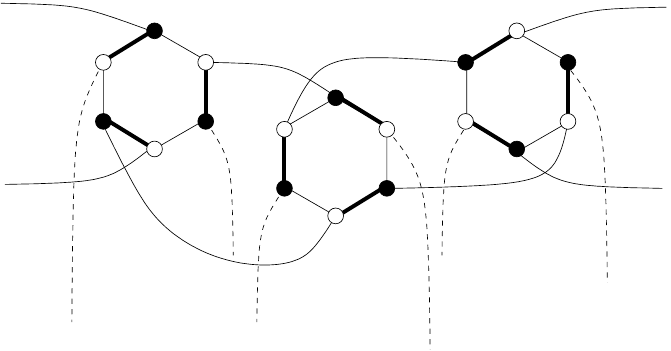_t}\else\input{4-2-new.ps_t}\fi
  \label{fig.42}}
\caption{P-no-edges are represented by dotted lines. Perfect matchings leading to a reduction of
$\{h_x, h_y, h_z\}$ without 
ends are depicted by thicker lines.
In the case that $e_1 =e_2$, the reduction by the same perfect matchings is safe.
}
\label{fig.4}
\end{figure}

Secondly, we assume that $K$ has a $F$-obstacle, where $F$ is the fork. 
In this proof, we use names of vertices, edges and hexagons of $F_K$ from Figure~\ref{fig.abad}.
Also, using Figure~\ref{fig.abad}, we denote by $M_a$ and $M_b$ the perfect matchings 
indicated by thicker lines on hexagons $h_a$ and $h_b$, respectively. Moreover,
let $N_a$ and $N_b$ denote the perfect matchings of $h_a$ and $h_b$ 
that are complements of $M_a$ and $M_b$, respectively.

In order to use the first part of this proof regarding the 3-ear, 
we observe that the reduction of $h_a, h_b$ by $M_a, N_b$ (by $N_a, M_b$, respectively) is safe
and generates a $P$-obstacle in the resulting pseudohex, where $P$ is the 3-ear induced by the set of vertices $\{x, z, y\}$
corresponding to the set of hexagons $\{h_x, h_z, h_y\}$.

  Let $e_1$, $e_2$ be a potential pair of $F$-no-edges of $K$ that is not red-connected.
  Without loss of generality, we distinguish three cases depending on the hexagons to which the end-vertices
  of the pair of edges $e_1$, $e_2$ belong to. 
\begin{itemize}
\item[(i)] Both edges $e_1, e_2$ have an end-vertex in $V(h_x \cup  h_y)$.
Then we can safely reduce $h_a, h_b$ by $M_a, N_b$. 
We get a $P$-obstacle, where the non red-connected pair $e_1, e_2$ of $F$-no-edges becomes a pair of $P$-no-edges that is 
not red-connected and we can use the first part of this proof to complete the argument.

\item[(ii)] Edge $e_1$ is incident to a vertex in $V(h_x \cup h_y)$
and $e_2$ is incident to a vertex in $V(h_a \cup h_b)$. 
Then $e_2$ is incident to a vertex from the set $\{r_1, r'_1, r_2, r'_2\}$; see Figure~\ref{fig.abad}.
If $e_2$ is incident to $r_1$ then reduction of $h_a, h_b$ by $M_a, N_b$ generates a 
$P$-obstacle where $e_1$ and the derived white edge
starting at $w'$ (that is, the edge obtained by reduction of a path containing $e_2$)  are not red-connected. 
Again, we use the first part of this proof to complete the argument. 
The remaining cases that $e_2$ is incident to 
$r'_1$, $r_2$ or $r'_2$ can be worked out in the same way.

\item[(iii)] Both edges $e_1, e_2$ have an end-vertex in $V(h_a \cup h_b)$.
In the case that the pair $e_1, e_2$ is incident to $r_1, r'_1$ ($r_2, r'_2$, respectively), 
we reduce $h_a,h_b$ by $M_a, M_b$ ($N_a, N_b$, respectively).
In both cases a path consisting of real 
white edges and $e_1, e_2$ is contracted to a single derived white edge disjoint from $V(P_K)$. 
Moreover, the derived white edge incident to $w$ ($w'$, respectively) 
is obtained by contracting a path consisting only of real white edges;
therefore, this edge is not red-connected to the derived white edges incident to $V(h_x \cup h_y)$. 
Hence, both reductions generate a $P$-obstacle with a
  potential pair of $P$-no-edges that are not red-connected. Again, by the first part of this proof 
  the result holds.\end{itemize}
                   
\end{proof}

The following statement basically claims that it is always possible to safely reduce the vertex
set of the starting triangle of a fork-graph.  

\begin{observation}
\label{o.triangle}
If $K$ is a proper pseudohex with $G^K$ a cycle of length three, then the set of all hexagons
of $K$ has a correct (and safe) reduction. 
\begin{figure}[h]
\begin{center}
\ifpdf\input{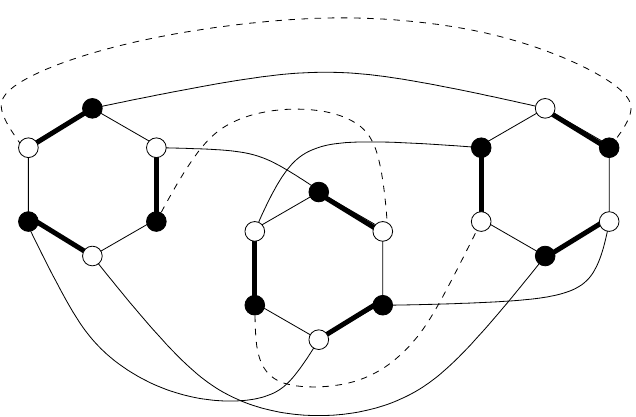_t}\else\input{triangle.ps_t}\fi
\end{center}
\caption{Correct and safe reduction of a proper triangle: 
 reduction of each hexagon by the perfect matching represented by thicker lines.
 Derived white edges are depicted as dotted lines.}
\label{fig.triangle}
\end{figure}\end{observation}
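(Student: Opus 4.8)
The plan is to reduce the three hexagons by explicit perfect matchings and to verify directly that the joint reduction is both safe and end-free, as summarised in Figure~\ref{fig.triangle}. First I would fix notation for the structure of $K$. Write the three hexagons as $h_1,h_2,h_3$; since $G^K$ is a triangle without $2$-cycles, every pair $h_i,h_j$ is joined by exactly one pair of real white edges, and (counting the six white edges incident with each hexagon) the remaining two white edges at each $h_i$ are derived. Because real white edges occur in mirror pairs $e,\bar{e}$, the two endpoints in $h_i$ of each such pair form a mirror pair $\{x,\bar{x}\}$; hence the three mirror pairs (``axes'') of $h_i$ split as two \emph{real axes}, one toward each neighbour, and one \emph{derived axis} carrying the two derived white edges. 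This mirror-axis description is what makes a uniform choice of matchings possible.

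Next I would choose, for each hexagon, one of its two \emph{blue matchings} $\{v_0v_1,v_2v_3,v_4v_5\}$ or $\{v_1v_2,v_3v_4,v_5v_0\}$. The key point is that any blue matching pairs vertices lying on \emph{different} axes (consecutive vertices of the blue $6$-cycle never share an axis), and in particular the two derived endpoints $z,\bar{z}$, being non-adjacent on the $6$-cycle, are never matched to one another. Consequently every contracted path of such a reduction joins a derived white edge to a real white edge, or joins two real white edges, so each contracted path contains at most one derived white edge. This already yields safety for \emph{any} combination of blue matchings, and reduces the whole statement to the correctness (end-free) requirement.

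For correctness I would analyse which red edges the reduction can create. A contracted path produces a new white edge $e_i$ between the outer endpoints $p,q$ of its two white edges, while new red edges arise only from red edges attached to the interior (matched) vertices of the path. The diagonal red edges $\{v,\bar{v}\}$ are internal to the hexagon and simply disappear, so a new red edge can be created only from an extra red edge at a derived endpoint, that is, from a red-connection between two derived white edges. Using that $K$ is proper (it has no end to begin with), one checks that such a new red edge cannot be parallel to the new white edge on its own path; the only remaining danger is that it becomes parallel to a new white edge produced on another path. I would rule this out by exploiting the freedom in the $2^3$ choices of blue matchings: a short case analysis, organised by how the three derived white edges and their red-connections are distributed among $h_1,h_2,h_3$ (and paralleling the configurations drawn in Figure~\ref{fig.triangle}), shows that at least one joint choice avoids every end.

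The main obstacle is precisely this last verification: the safety part is automatic once blue matchings are used, but correctness forces one to track how the red-connections among the derived white edges (the arcs created by the earlier reductions) interact with the newly formed white edges. The properness of $K$ --- no initial end and no $2$-cycle in $G^K$ --- together with the two-fold freedom at each hexagon is what guarantees that an end can always be avoided; making this bookkeeping fully explicit, rather than the construction itself, is where the real work lies.
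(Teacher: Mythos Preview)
Your axis description is correct, but the safety argument has a genuine gap. You write that under a blue matching ``every contracted path of such a reduction joins a derived white edge to a real white edge, or joins two real white edges, so each contracted path contains at most one derived white edge.'' This is a statement about the three local paths $P_1,P_2,P_3$ of Definition~\ref{def:reductofh}, each consisting of one matching edge and its two adjacent white edges. But the definition of a \emph{safe reduction of a set of hexagons} refers to the contracted paths of the \emph{joint} reduction: here, since all three hexagons are reduced, those are the full alternating cycles in $W(K)\cup N_1\cup N_2\cup N_3$, and such a cycle may cross several hexagons and pick up several derived white edges. Concretely, with the derived edges forming the directed $3$-cycle $\{v^1_0,v^2_3\},\{v^2_0,v^3_3\},\{v^3_0,v^1_3\}$ and with blue matchings $M_1,M_2$ but the \emph{other} blue matching $N_3$ on $h_3$, one of the alternating cycles has length~$14$ and contains two derived white edges; that choice is not safe. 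So safety is \emph{not} automatic for every combination of blue matchings, and you cannot defer all the work to the correctness step.

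In fact the difficulty is the other way round. Since you reduce all hexagons of $K$, the joint reduction leaves the empty pseudohex; no new white edges are produced, hence no end can be created, and correctness follows immediately once safety is in hand (this is what the paper means by ``each contracted cycle \ldots\ has at most one derived white edge, and thus induces no red edge''). The paper therefore takes a much shorter route than your proposed $2^3$ case analysis: by the symmetry of the triangle one may fix one derived white edge, and then properness (no end) forces the position of the other two --- the three derived edges necessarily form a directed $3$-cycle among the derived axes, so there is essentially a single configuration. For that configuration the paper simply exhibits one choice of perfect matchings (Figure~\ref{fig.triangle}) and checks that each alternating cycle contains at most one derived white edge. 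What your plan is missing is precisely this: pick one specific triple of matchings and verify the global cycle structure, rather than relying on a local argument that does not transfer.
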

\begin{proof}
Without loss of generality, we assume that $K$ has a derived white edge $e$ as in Figure \ref{fig.triangle};
there is no loss of generality, since $G^K$ is symmetric.
Then, since $K$ is proper, the remaining two derived white edges have to be 
configured as in Figure~\ref{fig.triangle}.
The perfect matchings indicated by thicker lines in Figure~\ref{fig.triangle} lead to a correct and safe reduction 
of the set of all hexagons
of $K$, since each contracted cycle (the edges of a contracted cycle alternate
white edges and edges from the indicated perfect matchings) of this reduction
has at most one derived white edge, and thus induces no red edge.
\end{proof}

In the next definition we present an extra obstacle for a safe reduction of a 3-ear.

\begin{definition}
\label{def.3ebad}
Let $K, K'$ be proper pseudohexes and $K$ obtained from $K'$ by 
  $P$-addition, where $P$ is the 3-ear. Let $\{x,z,y\}$ be the vertex
  set of $P$ with $x,y$ its leaves.
We say that $K$ has a {\em $P$-danger} if one $P$-no-edge has its end vertices in  $V(h_x\cup h_y)$, one 
  $P$-no-edge has one end vertex in  $V(h_x \cup h_y)$ and the other end vertex in $V(h_z)$, and 
  the remaining two $P$-no-edges have exactly one end vertex in $V(P_K)$. 

Moreover, we say that $K$ has a {\em $P$-bad} if it has a $P$-danger and the $P$-no-edge with both end vertices in
$V(h_x \cup h_y)$ is red-connected to the $P$-no-edge
that has exactly one end vertex in $V(P_K)$ and it belongs to $V(h_z)$.
An example of a $P$-bad is in Figure \ref{fig.bad}, where the two edges which must be red-connected are $e$ and $\g$.
\begin{figure}[h]
\begin{center}
\ifpdf\input{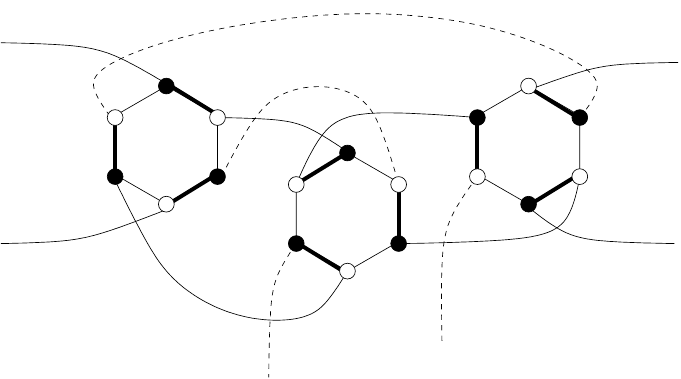_t}\else\input{bad.ps_t}\fi
\end{center}
\caption{P-danger pseudohex; P-no-edges are depicted by dotted lines. If it is P-bad then edges $e,\g$ are red-connected. 
If $ e,\g$ are not red-connected, then perfect matchings depicted by thicker lines
lead to a reduction of $\{h_x, h_y, h_z\}$ that creates no end.}
\label{fig.bad}
\end{figure}
\end{definition}

\begin{theorem}
\label{thm.frsst}
Let $K, K'$ be proper pseudohexes and $K$ obtained from $K'$ by 
  $P$-addition, where $P$ is a 3-ear. 
If $K$ has neither a $P$-obstacle nor a $P$-danger, then $P_K$ can be safely reduced.

Moreover, if $K$ has a $P$-danger but not a $P$-bad then there is a reduction of $P_K$ creating no end and such that
each but one contracted path has at most one derived white edge, and the contracted path that has more than
one derived white edge contains exactly two derived white edges, namely $\g$ and  $e$ 
 (see Figure~\ref{fig.bad} and Definition~\ref{def.3ebad}).
\end{theorem}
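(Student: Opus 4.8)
The plan is to prove both statements constructively: for each admissible configuration of the $P$-no-edges I would exhibit explicit perfect matchings $N_x, N_z, N_y$ of the three hexagons $h_x, h_z, h_y$ of $P_K$ (the leaves being $x,y$ and the middle being $z$) and read off the contracted paths of the joint reduction, exactly in the style of Figures~\ref{fig.4} and~\ref{fig.bad}. The organizing principle is a case analysis on how each $P$-no-edge attaches to $V(P_K)$: a no-edge is \emph{internal} if both its end-vertices lie in $V(P_K)$ and \emph{external} otherwise, and in the internal case I record whether it joins the two leaf-hexagons, or a leaf-hexagon to $h_z$. The hypothesis that $K$ has no $P$-obstacle says precisely that at least one $P$-no-edge is internal, so this dichotomy is the backbone of the argument.

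For part~1 I would use the observation that a reduction creates an end only when some contracted path contains a red-connected pair of no-edges; hence any reduction in which every contracted path carries at most one derived white edge is automatically safe and end-free. Thus part~1 reduces to a purely combinatorial separation problem: to choose $N_x, N_z, N_y$ so that the few $P$-no-edges are routed into pairwise distinct contracted paths, the internal no-edge(s) being absorbed into paths that pick up no second no-edge. Running through the attachment patterns allowed by ``no $P$-obstacle'', the matchings indicated by the thicker lines in Figure~\ref{fig.4} (up to the symmetry exchanging $x$ and $y$) achieve this separation in every case except one, namely the $P$-danger pattern, in which two no-edges are forced to share a contracted path; this single exceptional pattern is excluded by hypothesis, which is exactly where the ``no $P$-danger'' assumption is consumed.

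For part~2 the reduction need no longer be safe but only end-free, and here I would invoke the matchings drawn in Figure~\ref{fig.bad} directly. With those matchings every contracted path but one contains at most one derived white edge, and the exceptional path contracts precisely the leaf-leaf no-edge $e$ together with the leaf-to-$h_z$ no-edge $\gamma$ of the danger pattern. By the end-creation criterion recalled above and used in Observation~\ref{o.obss}, this path creates an end only if $e$ and $\gamma$ are red-connected; since $K$ is assumed not to be $P$-bad, they are not, so no end is created and the stated structural conclusion holds verbatim.

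The main obstacle I anticipate is the bookkeeping behind part~1: one must check exhaustively that for \emph{every} attachment pattern compatible with ``no $P$-obstacle and no $P$-danger'' there really is a joint matching separating all no-edges, and, symmetrically, that the $P$-danger pattern is the \emph{unique} obstruction to such separation among non-obstacle pseudohexes. Establishing this clean trichotomy (separable, danger-not-bad, danger-bad) is the delicate point; once it is in place, the verification that each contracted cycle alternates white and matching edges and therefore induces no superfluous red edge is routine, as in the proof of Observation~\ref{o.triangle}.
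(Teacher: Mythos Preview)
Your overall strategy is the same as the paper's: explicit matchings on the three hexagons, with part~1 organized by the attachment pattern of the $P$-no-edges and part~2 read off directly from Figure~\ref{fig.bad}. Two details need fixing.

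First, the matchings of Figure~\ref{fig.4} do not cover all non-danger configurations in part~1. The paper splits into three cases according to the number of external $P$-no-edges (zero, two, or four, the count being even by bipartiteness); Figure~\ref{fig.4} handles only the four-external case (one internal no-edge). When all no-edges are internal the paper invokes Observation~\ref{o.triangle}, and when exactly two are external the paper exhibits three further explicit configurations (Figures~\ref{fig.tre1}--\ref{fig.tre3}) in addition to the $P$-danger subcase. Your final paragraph correctly anticipates that this exhaustive check is the main work, but your claim that the matchings of Figure~\ref{fig.4} already achieve the separation ``in every case except one'' is too optimistic.

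Second, your identification of $\gamma$ in part~2 is wrong: $\gamma$ is not the internal leaf-to-$h_z$ no-edge but the \emph{external} no-edge whose unique end-vertex in $V(P_K)$ lies in $V(h_z)$ (see Definition~\ref{def.3ebad} and the sentence ``the two edges which must be red-connected are $e$ and $\gamma$''). The construction via Figure~\ref{fig.bad} still works, but the pair whose non-red-connectedness guarantees no end is $(e,\gamma)$ with this $\gamma$, not the one you describe.
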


\begin{proof}
The second part of the theorem follows from Figure \ref{fig.bad}.

In order to prove the first part, without loss of generality we distinguish the following three cases. 

\begin{itemize}
 \item[(i)]  There are no $P$-no-edges of $K$ incident with exactly one vertex of $V(P_K)$;
that is, all $P$-no-edges of $K$ have both end vertices in $V(P_K)$. Then, the result of Theorem~\ref{thm.frsst} 
follows from Observation~\ref{o.triangle}.
\item[(ii)] Exactly four $P$-no-edges of $K$ are incident with exactly one vertex of $V(P_K)$. 
Hence, there is exactly one derived white edge with both end vertices in $V(P_K)$; let us denote it by~$e$.
  Edge $e$ connects either the two non-central hexagons $h_x$ and $h_y$, 
  or the central hexagon with one non-central hexagon; namely, $h_x$ and $h_z$ or $h_y$ and $h_z$.
Without loss of generality, these two cases are described in Figures~\ref{fig.41} and~\ref{fig.42}, where $e=e_1=e_2$. 
In both cases, the perfect matchings that lead to a safe reduction of $\{h_x, h_y, h_z\}$ 
  are indicated by thicker lines.
\item[(iii)] 
Exactly two
$P$-no-edges of $K$ are incident with exactly one vertex of $V(P_K)$. 
Hence, there are exactly two P-no-edges of $K$ with both end vertices in $V(P_K)$. 
Without loss of generality, the two cases that arise when they both connect vertices between the same pair 
of hexagons are depicted in Figures~\ref{fig.tre1} and \ref{fig.tre2}. 
If they connect vertices of distinct 
pairs of hexagons then aside of the $P$-bad ($P$-danger in case that there is no red-connection) there is one more 
case, which is depicted in Figure \ref{fig.tre3}. 
In all the three described cases, the perfect matchings that lead 
to a safe reduction of $\{h_x, h_y, h_z\}$ are indicated by thicker lines. 
\end{itemize}
\begin{figure}[h]
\centering
\subfigure[]
{
\ifpdf\input{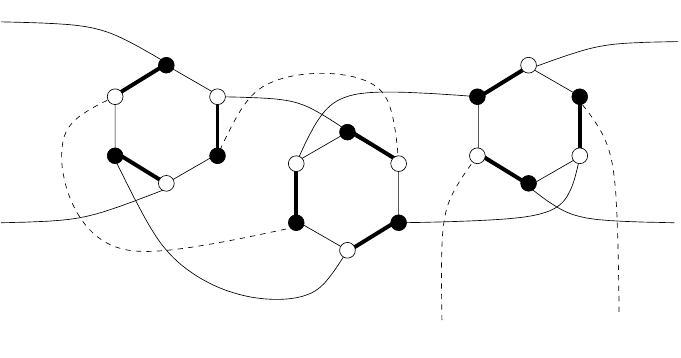_t}\else\input{tre1.ps_t}\fi
 \label{fig.tre1}
}\qquad
\subfigure[]
{
\ifpdf\input{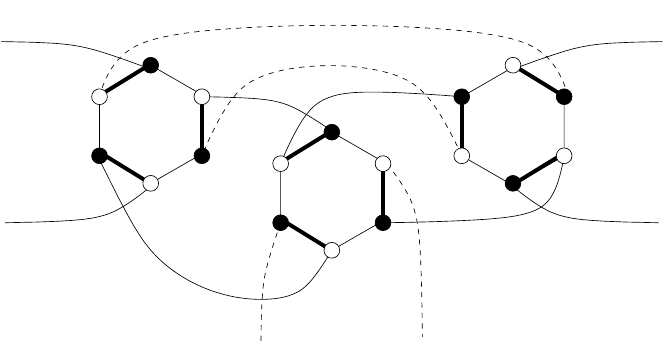_t}\else\input{tre2.ps_t}\fi
 \label{fig.tre2}
}

\subfigure[]
{
\ifpdf\input{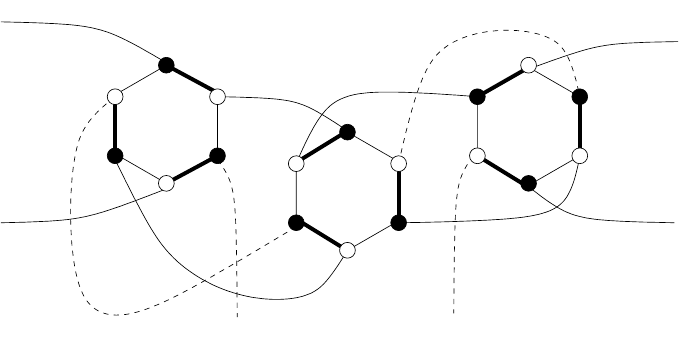_t}\else\input{tre3.ps_t}\fi
 \label{fig.tre3}
}
\caption{The distinct configurations of the case (iii) of the proof of Theorem~\ref{thm.frsst}:
exactly two $P$-no-edges of $K$ have both end vertices in $V(P_K)$. 
Again, $P$-no-edges are represented by dotted lines.}
\end{figure}
\end{proof}

We define the remaining two obstacles for a correct reduction of a fork.

\begin{definition}
\label{def.abbad}
Let $K$ and $K'$ be proper pseudohexes such that $K$ 
is obtained from $K'$ by $F$-addition, where $F$ is the fork. 
We say that $K$ has a {\em $F$-abad} if $F_K$ is configured as in Figure~\ref{fig.abad}
and that $K$ has a {\em $F$-bbad} if $F_K$ is configured as in Figure~\ref{fig.bbad}. 
\end{definition}

\begin{figure}[h]
\centering
\subfigure[$F$-abad pseudohex]
{
\ifpdf\input{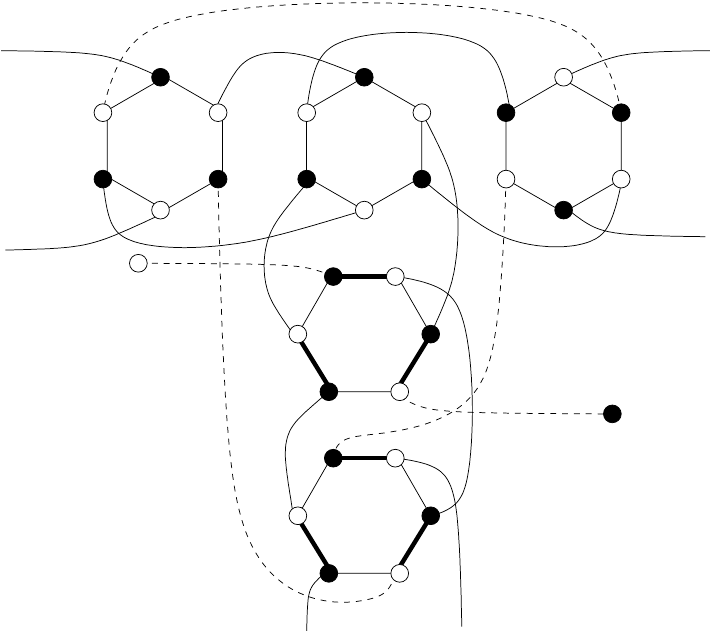_t}\else\input{abad.ps_t}\fi
 \label{fig.abad}
}
\subfigure[$F$-bbad pseudohex]
{
\ifpdf\input{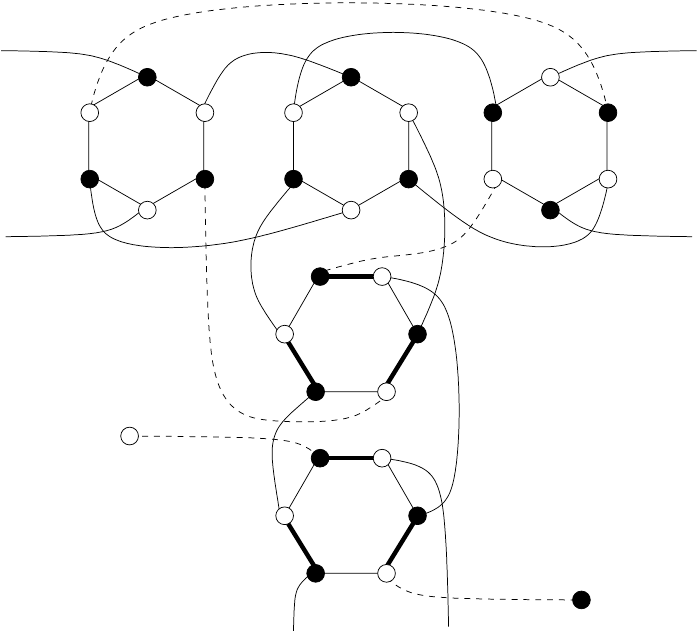_t}\else\input{bbad.ps_t}\fi
 \label{fig.bbad}
}
\caption{$F$-no-edges are represented by dotted lines.}
\label{fig.abbads}
\end{figure}

\begin{theorem}
\label{thm.scnd}
Let $K, K'$ be proper pseudohexes such that $K$ is obtained from $K'$ by $F$-addition, where $F$ is the fork. 
If $K$ has neither a $F$-obstacle nor a $F$-abad nor a $F$-bbad, then 
there is a safe reduction of~$F_K$. 

Moreover, if $K$ has a $F$-abad or a $F$-bbad, and the edge $e$ (see Figure~\ref{fig.abbads}) 
is not red-connected to another $F$-no-edge $e'$ of $K$,
then there is a reduction of $F_K$ so that all but one contracted path have at most one derived white edge
and the contracted path that has more than
one derived white edge contains exactly two derived white edges, namely $e$ and $e'$.
\end{theorem}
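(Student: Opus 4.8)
The plan is to reduce the fork to a 3-ear and then appeal to Theorem~\ref{thm.frsst}. As in the proof of Observation~\ref{o.ob2}, I would first reduce the two handle hexagons $h_a, h_b$ by one of the complementary pairs of perfect matchings $(M_a, N_b)$ or $(N_a, M_b)$ taken from Figure~\ref{fig.abad}. Each such reduction is safe, and it turns $F_K$ into a 3-ear $P$ on $\{h_x, h_z, h_y\}$ with $x, y$ as leaves. Every $F$-no-edge of $K$ then corresponds to a unique $P$-no-edge of the resulting pseudohex $\tilde K$: it either survives unchanged, or it is the single derived white edge of a handle-path that otherwise contains only real white edges. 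Consequently, any safe reduction of $P_K$ in $\tilde K$ composes with the handle-reduction to give a safe reduction of $F_K$, since each 3-ear contracted path then carries at most one $P$-no-edge, hence at most one $F$-no-edge, so the joint reduction of all five hexagons is safe.

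With this reduction in place, the proof becomes a finite case analysis on the incidence pattern of the eight $F$-no-edges among $h_a, h_b, h_x, h_y, h_z$, mirroring the three cases (i)--(iii) in the proof of Observation~\ref{o.ob2}. For each pattern I would record the 3-ear configuration produced by $(M_a, N_b)$ and by $(N_a, M_b)$, tracking red-connections. The goal is to show that, if $K$ has no $F$-obstacle, no $F$-abad and no $F$-bbad, then at least one of the two handle-reductions yields a 3-ear with neither a $P$-obstacle nor a $P$-bad, at which point Theorem~\ref{thm.frsst} supplies the desired safe reduction. Transport of red-connections is clean here, because reducing a hexagon only reroutes a red edge to the vertex of the new white edge in the same bipartition class (Definition~\ref{def:reductofh}); thus a non-red-connected potential pair of $F$-no-edges remains non-red-connected as a pair of $P$-no-edges, which is exactly the hypothesis Theorem~\ref{thm.frsst} demands.

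The main obstacle is verifying that $F$-abad and $F$-bbad are exactly the obstructions, i.e.\ that these are the only incidence patterns (Figures~\ref{fig.abad},~\ref{fig.bbad}) for which \emph{both} handle-reductions are forced into a $P$-bad. An $F$-obstacle forces a $P$-obstacle under either reduction directly by Observation~\ref{o.obss}; the delicate part is checking, configuration by configuration, that every remaining pattern admits a handle-reduction escaping the $P$-bad of Definition~\ref{def.3ebad}. This is essentially the combinatorial content encoded in Figures~\ref{fig.abad} and~\ref{fig.bbad}, and I expect the bulk of the work to be this bookkeeping rather than any single conceptual step.

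For the moreover statement I would use the second half of Theorem~\ref{thm.frsst}. If $K$ is $F$-abad or $F$-bbad but the distinguished edge $e$ is not red-connected to some other $F$-no-edge $e'$, then an appropriate handle-reduction produces a $P$-danger that is not a $P$-bad, with $e$ and the image of $e'$ playing the roles of the two edges that Theorem~\ref{thm.frsst} permits on a single contracted path. The conclusion---all but one contracted path carry at most one derived white edge, and the exceptional path carries exactly $e$ and $e'$---then transfers through the composition described in the first paragraph, since each $P$-no-edge traces back to a single $F$-no-edge.
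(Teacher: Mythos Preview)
Your overall strategy matches the paper's: safely reduce the handle hexagons $h_a, h_b$ first, then invoke the 3-ear theory (Theorem~\ref{thm.frsst} and Observation~\ref{o.ob2}) on the remaining 3-ear $P$ on $\{x,z,y\}$; the moreover clause is handled exactly as you describe, via the second part of Theorem~\ref{thm.frsst}.

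There are, however, two concrete gaps in your plan. First, the two reductions $(M_a,N_b)$ and $(N_a,M_b)$ are not always enough. When $\{r_1,r'_1\}$ (respectively $\{r_2,r'_2\}$) is itself a derived white edge of $K$, the paper instead reduces by $(M_a,M_b)$ (respectively $(N_a,N_b)$); these are Cases~1 and~2 of the proof, and the remark opening Case~3 that ``reductions of $h_a,h_b$ by $M_a,N_b$ and by $N_a,M_b$ are safe'' is made precisely because safeness of your two reductions is not automatic outside Case~3. Second, your stated target---that at least one handle-reduction lands in a 3-ear with neither a $P$-obstacle nor a $P$-bad---is too strong. In Cases~1 and~2 the paper explicitly allows the chosen reduction to produce a $P$-obstacle; the point is that this reduction simultaneously creates a \emph{fresh} $P$-no-edge $e_r$ (arising from a contracted path of purely real white edges), hence $e_r$ is red-connected to nothing, and one finishes via Observation~\ref{o.ob2} rather than the first part of Theorem~\ref{thm.frsst}. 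So the right target is: either avoid both $P$-obstacle and $P$-bad, or land in a $P$-obstacle that carries a non-red-connected potential pair. With these two amendments your outline becomes the paper's proof.
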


Before presenting the proof of Theorem~\ref{thm.scnd}, 
we show that the following statement is a straightforward consequence of Theorem~\ref{thm.scnd}.


\begin{theorem}
\label{thm.fearr}
Let $K, K'$ be proper pseudohexes such that $K$ is obtained from $K'$ by $F$-addition, where $F$ is a fork.
If each $F$-no-edge of $K$ is a subset of $V(F_K)$, then 
there is a safe reduction of $F_K$. 
\end{theorem}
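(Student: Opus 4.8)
The plan is to derive Theorem~\ref{thm.fearr} as a direct consequence of Theorem~\ref{thm.scnd}. The hypothesis of Theorem~\ref{thm.fearr} is that every $F$-no-edge of $K$ has \emph{both} end vertices inside $V(F_K)$ (i.e.\ each $F$-no-edge ``is a subset of $V(F_K)$''). The strategy is to show that under this hypothesis none of the three obstructions appearing in Theorem~\ref{thm.scnd} --- an $F$-obstacle, an $F$-abad, or an $F$-bbad --- can occur, so that the first (unconditional) conclusion of Theorem~\ref{thm.scnd} applies and yields a safe reduction of $F_K$.

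First I would recall the definitions of the three obstacles and observe that each of them \emph{requires} the existence of an $F$-no-edge with exactly one end vertex in $V(F_K)$. Indeed, by definition $K$ has an $F$-obstacle precisely when every $F$-no-edge is incident to exactly one vertex of $V(F_K)$; so the presence of an $F$-obstacle forces at least one (in fact every) $F$-no-edge to cross the boundary of $V(F_K)$, which contradicts the hypothesis that all $F$-no-edges lie entirely within $V(F_K)$. Hence no $F$-obstacle is present. For the $F$-abad and $F$-bbad cases, I would inspect the configurations in Figures~\ref{fig.abad} and~\ref{fig.bbad}: in both, the $F$-no-edges drawn as dotted lines include edges leaving $V(F_K)$ toward the rest of $K$ (in particular the edge $e$ of Figure~\ref{fig.abbads}, which is the one that may be red-connected to an external $F$-no-edge $e'$). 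Thus an $F$-abad or $F$-bbad configuration likewise demands an $F$-no-edge with only one end vertex inside $V(F_K)$, and the hypothesis rules both out.

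Having eliminated all three obstacles, I would invoke the first part of Theorem~\ref{thm.scnd}: since $K$ has neither an $F$-obstacle nor an $F$-abad nor an $F$-bbad, there is a safe reduction of $F_K$. This is exactly the conclusion of Theorem~\ref{thm.fearr}, completing the argument.

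The main obstacle I anticipate is purely definitional bookkeeping rather than any substantive combinatorics: I must verify carefully, against the precise definitions of $F$-abad and $F$-bbad (Definition~\ref{def.abbad}) and the accompanying figures, that \emph{every} such configuration genuinely involves an $F$-no-edge straddling the boundary of $V(F_K)$. If some degenerate drawing of an abad/bbad were compatible with all $F$-no-edges being internal, the clean reduction to Theorem~\ref{thm.scnd} would fail and one would have to handle that residual case directly by exhibiting perfect matchings $N_a, N_b$ on the hexagons $h_a, h_b$ yielding contracted paths each with at most one derived white edge. I expect no such degeneracy to arise, so the proof should reduce to the short observation above that the stated hypothesis is incompatible with each of the three obstacles named in Theorem~\ref{thm.scnd}.
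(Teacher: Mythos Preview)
Your approach is correct and matches the paper's: both reduce to Theorem~\ref{thm.scnd} by checking that the hypothesis rules out an $F$-obstacle, an $F$-abad, and an $F$-bbad. The paper's verification of the abad/bbad cases is only marginally more explicit---it names the specific vertices $r'_1,r'_2$ (resp.\ $r_1,r_2$) whose incident $F$-no-edges must be distinct (using properness) and hence must leave $V(F_K)$---but this is exactly the ``definitional bookkeeping'' you anticipated.
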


\begin{proof} 
By Theorem \ref{thm.scnd}, it is enough to prove that $K$ has neither
a $F$-obstacle, nor a $F$-abad and nor a $F$-bbad. 
We consider names of vertices, edges and hexagons of $F_K$ from Figure~\ref{fig.abbads}.
By the hypothesis, it is trivial the fact that  $K$ does not have a $F$-obstacle. 

For the sake of contradiction, we assume that $K$ has a $F$-abad ($F$-bbad, respectively).
Then $F_K$ is configured as in a $F$-abad ($F$-bbad, respectively). But since $K$ has no ends (proper), 
the edges with end vertices $r'_1, r'_2$ ($r_1, r_2$, respectively) are distinct and
therefore, each of them has an end-vertex in $V(K)-V(F_K)$, a contradiction.
The result follows.
\end{proof}




\subsubsection{Proof of Theorem~\ref{thm.scnd}}
\label{sub.prscnd}
In the following, we recall that we consider names of vertices, edges and hexagons of $F_K$ from Figure~\ref{fig.abbads}.
Also, using Figure~\ref{fig.abad}, we denote by $M_a$ and $M_b$ the perfect matchings 
indicated by thicker lines on hexagons $h_a$ and $h_b$, respectively. Moreover,
let $N_a$ and $N_b$ denote the perfect matchings of $h_a$ and $h_b$
that are complements of $M_a$ and $M_b$, respectively.

We first prove the second part of Theorem~\ref{thm.scnd}. 
If $K$ has a $F$-abad and $e$, $e'{=} \{r'_2,\hat{r}_2\}$ are not red-connected, then 
we reduce $h_a, h_b$  by $N_a, M_b$. 
If $K$ has a $F$-abad and $e$, $e'{=}\{r'_1,\hat{r}_1\}$ are not 
red-connected, then we reduce $h_a, h_b$  by $M_a, N_b$. 
If $K$ has a $F$-bbad and $e$, $e'{=}\{r_2,\hat{r}_2\}$ are not red-connected, 
then we reduce $h_a, h_b$  by $N_a, M_b$. If $K$ has a $F$-bbad and $e$, 
$e'{=}\{r_1,\hat{r}_1\}$ are not red-connected, then we reduce $h_a, h_b$  by $M_a, N_b$. 
It is a routine to check that all these reductions of $\{h_a, h_b\}$ are safe, 
and that in the resulting pseudohex a $P$-danger is generated, where $P$ is the 3-ear with 
vertex set $\{x, z, y\}$, but the $P$-danger is not a $P$-bad; since $e$
is not red-connected to the $P$-no-edge 
obtained from contracting a path that contains $e'$, and $e'$ is incident to a vertex in $V(h_z)$
and to a vertex in $V(h_x {\cup} h_y {\cup} h_z)$. 
Hence, the second statement of Theorem~\ref{thm.scnd}  
follows from the second part of Theorem~\ref{thm.frsst}. 

In order to prove the first part of the statement 
of Theorem~\ref{thm.scnd}, we assume that $K$ has neither a $F$-obstacle, nor a $F$-abad, nor a $F$-bbad.

Without loss of generality, we distinguish three main cases; 
Case~1: $\{r_1,r'_1\}$  is a white edge of $K$, 
Case~2: $\{r_2, r'_2\}$ is a white edge of $K$, 
and Case~3: Neither $\{r_1,r'_1\}$ nor $\{r_2, r'_2\}$ are white edges of $K$. 
In what follows, we deal with the analysis of these cases.

{\bf Case 1:} If $\{r_1,r'_1\}$ is a white edge of $K$, then 
we reduce $h_a, h_b$ by $M_a, M_b$.

{\bf Case 2:} If $\{r_2, r'_2\}$ is a white edge of $K$, then  
we reduce $h_a, h_b$ by $N_a, N_b$.

Clearly, the reductions indicated in Cases 1 and 2 are safe.

Further, in both cases a path consisting entirely of real white edges is contracted to a 
single new derived white edge; let us denote it by $e_r$.
In Case 1, $e_r$ is incident to vertex $w$ and in Case 2, $e_r$ is incident to vertex $w'$. 

We assume that Case 1 holds (Case 2 is discussed analogously).
If reduction of $h_a$, $h_b$ by $M_a$, $M_b$ generates
a $P$-danger, where $P$ is the 3-ear with vertex set $\{x, z, y\}$, then necessarily $K$ has a $F$-no-edge 
with its end vertices in $V(h_x \cup h_y)$. We may assume it is the edge
$e$ of Figure \ref{fig.bad}. 
However, this edge $e$ is not red-connected to the new derived white edge $e_r$; which has been
producing by contracting a path that consists only of real
white edges of $K$. 
Hence, we have a $P$-danger
but not $P$-bad and the result follows from the second part of Theorem~\ref{thm.frsst}. 

Therefore, we need to assume that the indicated reduction do not generate a $P$-danger.
If the indicated reductions do not generate a $P$-obstacle then we are done by Theorem~\ref{thm.frsst}.
Hence, we assume that a $P$-obstacle is generated.
Recall that there exist one $P$-no-edge, namely $e_r$, that is the result
of the contraction of a path that consists only of real
white edges of $K$. Therefore, by Observation~\ref{o.ob2}, to have the desired result 
it is enough to prove that there is a potential pair of $P$-no-edges 
containing $e_r$ that is not red-connected. 

As we are in Case 1, the edge $e_r$ is incident to $w$ which belongs
to the bipartition class represented by black vertices in Figure \ref{fig.abad}. 
For the sake of contradiction we assume that $e_r$ 
is red-connected to both $P$-no-edges incident to two available 
white vertices, say $x'$ and $y'$, in $V(h_x)$ and $V(h_y)$, respectively. Then, each 
of these two $P$-no-edges must be created by contracting paths that intersect at 
least one of the hexagons $h_a, h_b$. Hence, in $K$ there are white edges connecting 
$x',y'$ to $r'_1, r_2$, which contradicts the assumption of Case 1 that $\{r_1,r'_1\}$ is a white edge of $K$.

\medskip\noindent

{\bf Case 3:} The vertices $r_1, r'_1, r_2, r'_2$ are not connected by white edges of $K$. 
	      We note that reductions of $h_a, h_b$ by $M_a, N_b$ and by $N_a, M_b$ are safe. 
	      Without loss of generality, we consider two subcases.

	      \medskip\noindent

{\bf Case 3.1:} None of the four distinct $F$-no-edges incident to a vertex in
$\{r_1, r'_1, r_2, r'_2\}$ is incident to a vertex in $V(h_x \cup h_z \cup h_y)$.

Since $K$ is not a $F$-obstacle, at least one 
  $F$-no-edge of $K$ has its end vertices in $V(h_x \cup h_z \cup h_y)$.
In this case no reduction of $h_a, h_b$ can lead to a $P$-obstacle. Moreover, if 
in the pseudohex obtained by reducing $h_a, h_b$ by $M_a, N_b$ 
both new derived white edges incident to $w, w'$ are not incident to a vertex in $V(h_x \cup h_z \cup h_y)$ 
and thus this reduction does not lead to a $P$-danger either. 
Hence, the statement of Theorem~\ref{thm.scnd} follows from Theorem~\ref{thm.frsst}.

\medskip\noindent

{\bf Case 3.2.} At least one of the four $F$-no-edges incident to 
a vertex in $\{r_1, r'_1, r_2, r'_2\}$ is also incident to a vertex in $V(h_x \cup h_z \cup h_y)$.

{\bf Case 3.2.1.} For each $i= 1,2$, exactly one of the $F$-no-edges 
incident to a vertex in $\{r_i, r'_i\}$ is incident to a vertex in $V(h_x \cup h_z\cup h_y)$. 
Hence, each of the two reductions of $h_a, h_b$ by $M_a,N_b$ and $M_b, N_a$  is safe
and creates a derived white edge that has both end vertices either in $V(h_x \cup h_z)$
or in $V(h_y \cup h_z)$
and thus, in particular, none of these two reductions creates a $P$-obstacle, where $P$ is the 3-ear on vertex set $\{x,y,z\}$.
By Theorem~\ref{thm.frsst}, it is enough to show that for each possible
case, at least one of these reductions does not generate a $P$-danger.

For the sake of contradiction, suppose that for some of the cases
both reductions generate a $P$-danger.
As we observed reductions of $h_a, h_b$ by $M_a,N_b$ and $M_b, N_a$
create derived white edge with both end vertices either in $V(h_x \cup h_z)$
or in $V(h_y \cup h_z)$. As both reductions generate a $P$-danger, 
it implies that there exist a $F$-no-edge with its end-vertices in $V(h_x \cup h_y)$.
Without loss of generality, we may assume it is the edge $e$ of Figure \ref{fig.abad}.

Hence the two $F$-no-edges incident to a vertex in $\{r_1, r'_1, r_2, r'_2\}$ 
and to a vertex in $V(h_x \cup h_z \cup h_y)$
are incident to vertices $t, t'$ in $V(h_x \cup h_z \cup h_y)$. These two vertices 
belong to different bipartition classes of $K$ 
and so the $F$-no-edges incident to them are incident either to $r_1, r_2$, or to $r'_1, r'_2$. 
Hence, $F_K$ is configured as a $F$-abad or a $F$-bbad, see Definition \ref{def.abbad}.
A contraction to the hypothesis assumption.

{\bf Case 3.2.2.} Suppose that Case 3.2.1. does not hold.
If both $F$-no-edges incident to $r_1, r'_1$ are incident to vertices in $V(h_x\cup h_z\cup h_y)$ 
then reduction of $h_a, h_b$ by $M_a, N_b$ is safe and generates two distinct derived
white edges, each of them with an end vertex in $\{w, w'\}$ and an end-vertex in $V(h_x \cup h_z \cup h_y)$. 
Thus, neither a $P$-obstacle nor
a $P$-danger is created. Analogously we solve the case that the two $F$-no-edges 
incident to $r_2, r'_2$ are incident to $V(h_x \cup h_z \cup h_y)$.

Without loss of generality, we are left with the cases that there is exactly one $F$-no-edge with
an end-vertex in $\{r_1, r'_1\}$, say $r_1$, and one end-vertex in $V(h_x \cup h_z \cup h_y)$ and 
there are no $F$-no-edges incident to both subsets, $\{r_2, r'_2\}$ and $V(h_x \cup h_z \cup h_y)$. 

By Theorem~\ref{thm.frsst}, it is enough to prove that there is a safe reduction of $\{h_a, h_b\}$
that does not generate either a $P$-obstacle, or a $P$-danger. 
Since, we have exactly three available vertices in $V(h_x \cup h_y)$
which are end-vertices of $F$-no-edges and are not incident to $V(h_a \cup h_b)$,
the following two cases arise.
   
{\bf Case 3.2.2.2.} Three $F$-no-edges incident to $V(h_x \cup h_z \cup h_y)$ 
have exactly one end vertex in $V(F_K)$. Then no reduction of 
$h_a, h_b$ can create a $P$-danger. 
Moreover, by assumption, there is a $F$-no-edge incident to $r_1$ and to 
a vertex in $V(h_x\cup h_z \cup h_y)$, and thus, the safe reduction of $h_a, h_b$ by $M_a,N_b$ creates a $P$-no-edge 
with end vertices in $V(h_x \cup h_z \cup h_y)$ and hence this reduction does not 
lead to a $P$-obstacle. Consequently, in this case, Theorem~\ref{thm.scnd} holds.

{\bf Case 3.2.2.1.} One $F$-no-edge has both end vertices in $V(h_x\cup h_z\cup h_y)$. 
 Then, no reduction of $h_a, h_b$ can create a $P$-obstacle. 
 Furthermore, the safe reduction of $h_a,h_b$ by $N_a,M_b$  creates derived white 
 edges incident to $w, w'$ and to  $V(h_x\cup h_z \cup h_y)$ and then, this 
 reduction does not create a $P$-danger and hence, Theorem \ref{thm.scnd} holds.


\subsection{Reduction of star forks, subforks and dots on pseudohexes} 
\label{sec.subforks_subears}

In this section we prove the following theorem.

\begin{theorem}\label{theo.sub-fork-ear}
Let $L$ be the star fork, the subfork, or the dot. Let $K, K'$ be proper pseudohexes and 
$K$ obtained from $K'$ by $L$-addition. Then $L_K$ 
can be safely reduced. 
\end{theorem}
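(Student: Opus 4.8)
The plan is to prove Theorem~\ref{theo.sub-fork-ear} by a direct case analysis for each of the three very small fork-type graphs, exploiting that each has so few degree-2 attachment vertices (and hence so few $L$-no-edges) that an obstacle never arises. Recall that a \emph{star fork} has three leaves, a \emph{subfork} is a single edge (two vertices), and a \emph{dot} is an isolated vertex; after taking the bold version and the $L$-addition, the hexagons in $L_K$ carry a small, fixed number of white edges leaving $V(L_K)$. The key structural fact I would establish first is that, because $K$ and $K'$ are \emph{proper} (no end, and $G^K$ is $2$-connected with no $2$-cycles), every $L$-no-edge is forced to be incident to exactly one vertex of $V(L_K)$ for these three members, or at least cannot form the kind of red-connected potential pair that would block a reduction.

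First I would handle the \textbf{dot}, which is the cleanest case: a bold dot is a single hexagon $h$ with two or three white edges leaving it (matching the two or three half-edges added). Reducing a single hexagon by an appropriate perfect matching $N$ contracts three paths, and since there are at most three white edges leaving $h$, one can choose $N$ so that each contracted path carries at most one derived white edge; properness (no end in $K$) guarantees the paths are genuine paths, not $2$-cycles producing ends, so the reduction is safe. Next I would treat the \textbf{subfork}: two adjacent hexagons $h_x, h_y$, each with one attached half-edge, joined by a real white pair. This is essentially a reduction of two hexagons, and I expect the argument to mirror the $2$-hexagon reductions already carried out in the proof of Theorem~\ref{thm.frsst} and Theorem~\ref{thm.scnd} (choosing complementary matchings $M$/$N$ on the two hexagons), reading off safety directly from a figure analogous to Figure~\ref{fig.triangle}.

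The \textbf{star fork} is the main case and the one I expect to be the real obstacle: it is the central fork-type graph with three hexagons meeting a branch vertex, so $L_K$ has more leaving white edges and more ways for them to interact through red edges. The plan here is to imitate the reduction strategy of Observation~\ref{o.triangle} and Theorem~\ref{thm.frsst}: pick perfect matchings on the three hexagons so that the contracted paths each pick up at most one derived white edge, and verify that no contracted cycle or path induces a red edge (i.e.\ creates an end). The delicate point is that, unlike the fork, the star fork belongs to the \emph{exclusive} fork-collection and carries no ``$L$-obstacle'' or ``$L$-bad'' exceptions in its hypothesis, so I must show that \emph{for every} admissible configuration of the $L$-no-edges a safe choice of matchings exists. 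I would argue that properness rules out the bad red-connected configurations outright: any would-be obstruction would force two parallel white edges or a red edge parallel to a white edge (an end) between the relevant hexagons, contradicting that $K$ is proper, exactly as in the contradiction step used in the proof of Theorem~\ref{thm.fearr}.

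Finally, to assemble the theorem I would state explicitly that for each $L \in \{\text{star fork}, \text{subfork}, \text{dot}\}$ the chosen matchings give a reduction in which every contracted path has at most one derived white edge and no end is created, which is precisely the definition of a safe reduction of $L_K$. The hardest part, as noted, is the star-fork case analysis; I expect it to reduce to a finite check displayed through one or two figures (in the style of Figures~\ref{fig.triangle} and~\ref{fig.4}), with properness doing the essential work of eliminating the configurations that would otherwise force an end.
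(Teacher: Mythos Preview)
Your proposal has a genuine gap in the star-fork case, and the ``key structural fact'' you plan to rely on is false.

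You assert that properness of $K$ forces every $L$-no-edge to have exactly one endpoint in $V(L_K)$, and that any residual obstruction can be eliminated by the same contradiction as in Theorem~\ref{thm.fearr}. Neither claim holds. Properness says only that $K$ has no end and that $G^K$ is $2$-connected without $2$-cycles; it does \emph{not} prevent a derived white edge from running between two leaf hexagons of the star fork (these leaves are not adjacent in $G^K$, but derived white edges are not recorded in $G^K$ at all). The paper's treatment of both the subfork and the star fork explicitly splits on whether some $L$-no-edge has both endpoints inside $V(L_K)$, so this configuration genuinely occurs and must be handled. Your appeal to Theorem~\ref{thm.fearr} is also misplaced: that theorem carries the extra hypothesis that \emph{every} $F$-no-edge is contained in $V(F_K)$, and it is precisely this hypothesis that makes the properness contradiction go through; Theorem~\ref{theo.sub-fork-ear} has no such hypothesis.

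What the paper actually does: the dot and subfork are indeed short, but already for the subfork the argument splits into two subcases (an $L$-no-edge inside $V(L_K)$ versus none), with a different pair of matchings in each. The star fork is not dispatched by a single picture plus properness; the paper defers it to Subsection~\ref{sub.supp}, inside the proof of Theorem~\ref{thm.superfork}, where a multi-case analysis (Case~1 and Cases~2.1--2.4) on the configuration of the six derived white edges at the three leaf hexagons is carried out, using Observation~\ref{o.super} to select among several explicit quadruples of matchings $M_{x'}/N_{x'}$, $M_{y'}/N_{y'}$, $M_{z'}/N_{z'}$, $M_{b'}/N_{b'}$. Note finally that the star fork has \emph{four} vertices (one center and three leaves), hence four hexagons; every case requires choosing a matching on all four, not three.
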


\begin{proof}
The proof in the case that $L$ is the star fork is contained in Subsection~\ref{sub.supp}. 
If $L$ is the dot, then it is trivial that every reduction of $L_K$ is safe.

Let $L$ be the subfork. 
In the following, we shall use the names of vertices and hexagons of $L_K$ from Figure~\ref{fig.subear-1-proof}. 
 Two subcases arise: (2.1) at least one $L$-no-edge has both end-vertices
 in $V(L_K)$ and (2.2) all $L$-no-edges have
 exactly one end-vertex in $V(L_K)$. Let $M_z$ and $M_b$ denote the perfect matchings depicted
 by thicker lines in Figure~\ref{fig.subear-1-proof} on $h_z$ and $h_b$, respectively.
 Moreover, let there $N_z$ and $N_b$ denote the perfect matchings of $h_z$ and $h_b$ 
 that are complements of $M_z$ and $M_b$, respectively. 

If $K$ is such that subcase (2.1) holds, and without loss of
generality $\{t,w\}$ (see Figure~\ref{fig.subear-1-proof} for notation)
 is 
 a derived white edge, then reduction of $h_z, h_b$ by $N_z, M_b$ is safe.
In the subcase (2.2) the reduction of $h_z, h_b$ by $M_z, M_b$ is safe.


 \begin{figure}[h]
\centering
\ifpdf\input{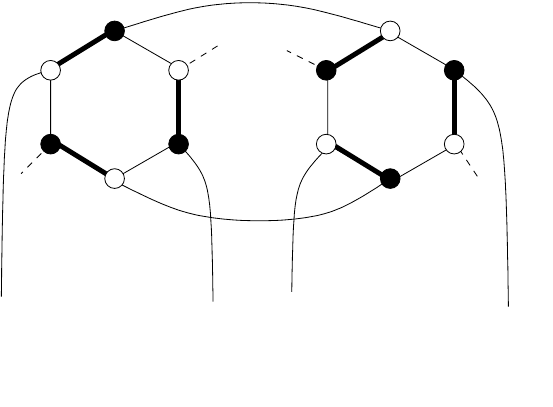_t}\else\input{subear-1-proof.ps_t}\fi
\caption{Subfork pseudohex. Perfect matchings $M_z$ and $M_b$ depicted in $h_z$ and $h_b$ by thicker lines.
Derived white edges are represented by dotted lines.}
\label{fig.subear-1-proof}
\end{figure}


\end{proof}

\subsection{Reduction of p-forks on pseudohexes}
 \begin{figure}[h]
\centering
\ifpdf\input{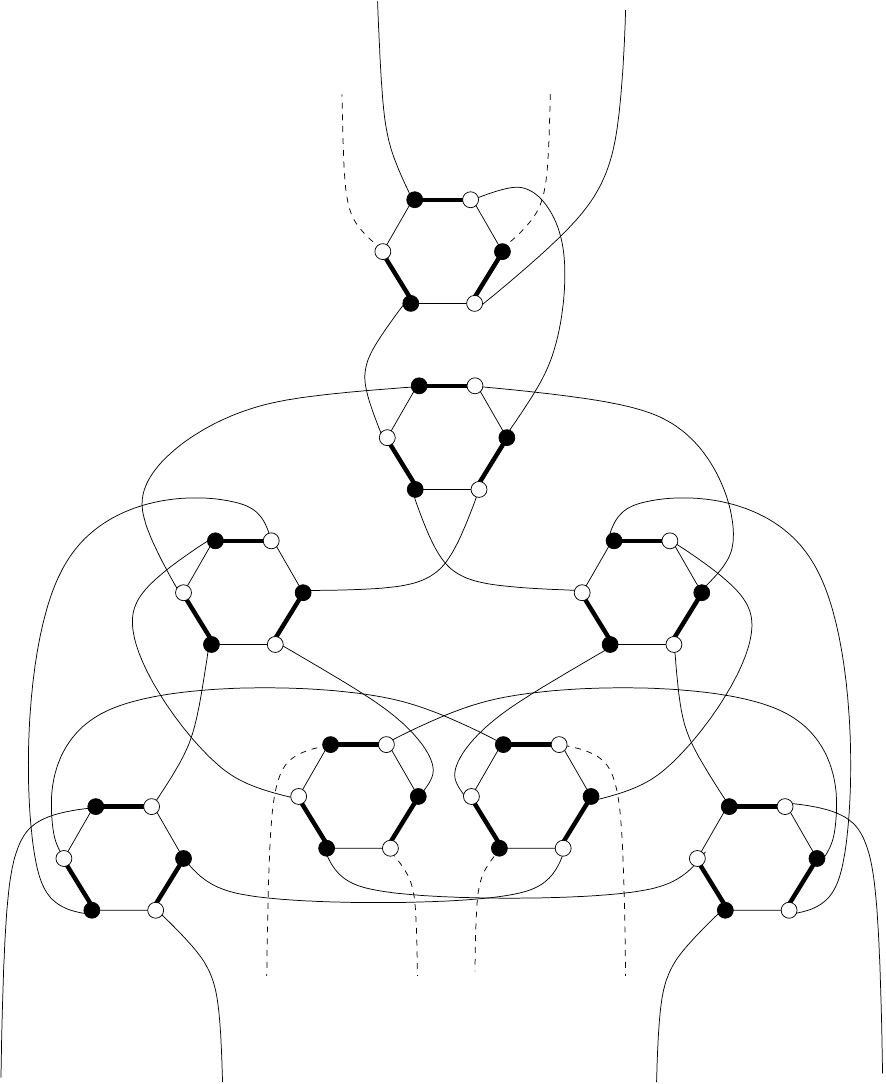_t}\else\input{petfork.ps_t}\fi
\caption{p-fork Pseudohex. Perfect matchings $M_a$, $M_b$, $M_a'$, $M_b'$, $M_z$, $M_z'$, $M_x$, $M_y$ 
depicted in $h_a$, $h_b$, $h_a'$, $h_b'$, $h_z$, $h_z'$, $h_x$, $h_y$ by thicker lines.
Derived white edges are dotted.}
\label{fig.petforkpse}
\end{figure}

We prove that there is a safe reduction of the p-fork on pseudohexes.

\begin{theorem}\label{theo.p-fork}
Let $L$ be a p-fork. 
Let $K, K'$ be proper pseudohexes and 
$K$ obtained from $K'$ by $L$-addition.
Then $L_K$ has a safe reduction. 
\end{theorem}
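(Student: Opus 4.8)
The plan is to reuse the ``reduce--then--collapse'' strategy behind the proofs of Theorems~\ref{thm.frsst} and~\ref{thm.scnd}: reduce one structural half of the p-fork by explicit perfect matchings so that $L_K$ collapses to a residual configuration supported by a fork or a 3-ear, and then close the argument with Theorem~\ref{thm.scnd} or Theorem~\ref{thm.frsst}.

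First I would fix the notation of Figure~\ref{fig.petforkpse}, writing $N_a, N_b, N_a', N_b', N_z, N_z', N_x, N_y$ for the complements of the matchings $M_a, M_b, M_a', M_b', M_z, M_z', M_x, M_y$. The key structural observation is that the p-fork meets the rest of $K$ only through a small set of boundary hexagons: the $L$-no-edges (derived white edges) are incident to the hexagons corresponding to the vertices of the p-fork that have degree two in $G_i$, while the white edges internal to the remaining hexagons are real. Exploiting the evident symmetry between the two halves $(a,b,z)$ and $(a',b',z')$, I would group the eight hexagons into an interior part carrying only real white edges and a boundary part carrying the $L$-no-edges.

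The central step is to reduce the interior hexagons by a suitable choice among the matchings $M_{\cdot}$ and $N_{\cdot}$. Since every white edge internal to that part is real, each contracted path produced there consists only of real white edges; hence this partial reduction is safe, and---exactly as in Cases~1 and~2 of the proof of Theorem~\ref{thm.scnd}---every new derived white edge it creates arises by contracting a path of only real white edges and is therefore not red-connected to any pre-existing $L$-no-edge. After this reduction $L_K$ collapses onto the boundary hexagons, and what remains is a fork or a 3-ear together with its $L$-no-edges.

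To finish I would invoke the matching theorem for the residual structure, namely Theorem~\ref{thm.scnd} if a fork remains and Theorem~\ref{thm.frsst} if a 3-ear remains, using Observation~\ref{o.ob2} to dispose of any obstacle that appears. The main obstacle will be to certify that the interior reduction never forces a $P$-bad (nor an $F$-abad/$F$-bbad): whenever a danger configuration is generated, one of the two offending no-edges is a freshly created edge obtained from a path of only real white edges, so it is not red-connected to the other, and the configuration is a danger but not a bad one, which the second parts of Theorems~\ref{thm.frsst} and~\ref{thm.scnd} are designed to resolve. The bulk of the remaining work is the finite case analysis over the placements of the $L$-no-edges among the boundary hexagons, choosing in each case the interior matchings so that no bad configuration is forced; the two-fold symmetry of the p-fork roughly halves this analysis.
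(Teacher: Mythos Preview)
Your reduce--then--collapse strategy is the right idea for the fork and for the big-fork, and the paper uses it there, but it breaks down for the p-fork because of a structural obstruction you have not accounted for. Theorems~\ref{thm.frsst} and~\ref{thm.scnd} apply only when the residual hexagons form a genuine $P$- or $F$-addition: they must be linked to one another, and to $K'$, by \emph{real} white-edge pairs in the specific path/tree pattern of a 3-ear or a fork. In the p-fork this cannot be arranged. The three hexagons carrying the $L$-no-edges correspond to the three vertices of the p-fork that have degree~$2$ in $G^K$; in the Petersen construction these are exactly the three neighbours of the ``dot'' vertex, and since Petersen has girth~$5$ they are pairwise non-adjacent in the p-fork. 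Consequently, after you strip out the five interior hexagons, the three boundary hexagons are joined to one another only by freshly created \emph{derived} white edges, never by real ones, and two of the three are not even joined to $K'$ by real white edges. The residual pseudohex is therefore not obtained from any $K''$ by 3-ear addition, and Theorem~\ref{thm.frsst} simply does not apply. A parallel count rules out collapsing to a fork: the three p-fork vertices that carry half-edges to $K'$ (which would have to serve as the three leaves of a residual fork) have pairwise disjoint neighbourhoods in the p-fork, so no vertex can play the role of the fork's branching vertex~$z$.

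This is precisely why the paper does not attempt to collapse the p-fork onto a smaller fork-type graph. Its proof is entirely direct: for each of the possible coincidence patterns among the six $L$-no-edges $b_x,w_x,b_y,w_y,b_z,w_z$ it writes down an explicit choice of perfect matchings on all eight hexagons simultaneously and verifies that the joint reduction is safe. No appeal to Theorem~\ref{thm.frsst} or Theorem~\ref{thm.scnd} is made or is available here.
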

\begin{proof}
We consider the notation from Figure~\ref{fig.petforkpse}. \\
In the case that the derived edges from the set $\{b_{x}, w_{x}, b_{y}, w_{y}, b_{z}, w_{z}\}$ are pairwise distinct,
the reduction of $L_K$ by 
$N_z$, $N_z'$, $N_b'$, $N_a'$, $N_b$, $M_a$, $M_x$, $M_y$ is safe.
We suppose that there are exactly two derived white edges that are the same, but the rest are all pairwise
distinct. Without loss of generality, two cases arise: (1) $b_z=w_x$, and (2) $b_x=w_y$.
In the first case, we reduce $L_K$ by 
$N_z$, $N_z'$, $M_b'$, $N_a'$, $M_b$, $N_a$, $N_x$, $N_y$, finding a safe reduction
and in the second one by $M_z$, $N_z'$, $N_b'$, $M_a'$, $N_b$, $M_a$, $M_x$, $M_y$.
If exactly two pairs of derived white edges are the same, again without loss of generality we have two cases:
(1')  $w_x=b_y$ and $w_y = b_z$,
and (2') $w_z=b_x$ and $b_z=w_y$. 
The reductions of $L_K$ by $N_z$, $M_z'$, $N_b'$, $M_a'$, $N_b$, $M_a$, $M_x$, $M_y$ for case (1') 
and by $N_z$, $M_z'$, $M_b'$, $N_a'$, $N_b$, $M_a$, $M_x$, $M_y$ for case (2') are safe reductions.
We are left with the case that all derived white edges have both end-vertices in $L_K$.
Then, the reduction of $L_K$ by the perfect matchings $M_z$, $N_z'$, $N_b'$, 
$N_a'$, $N_b$, $M_a$, $N_x$, $M_y$ is safe.
\end{proof}

In the next Subsection we study safe reductions
of $j$-big-forks on pseudohexes.

\subsection{Reduction of $j$-big-forks on pseudohexes}
\label{sub:bigfork}

We first need to introduce the concept of $B$-obstacles in the case that $B$ a $j$-big-fork. This notion
is equivalent to the notion of cut-obstacles. 

\begin{definition}
Let $K, K'$ be proper pseudohexes such that $K$ is obtained from 
$K'$ by $B$-addition, where $B$ is a $j$-big-fork. 
We say that $K$ has a {\em $B$-obstacle} if it has $2(4+j)$ distinct
$B$-no-edges.  
\end{definition}

In Theorem~\ref{thm.superfork}, we consider the case $j=1$. 
We recall that 1-big-forks are simply called big-forks. 

\begin{theorem}
\label{thm.superfork} 
Let $K, K'$ be proper pseudohexes such that $K$ is obtained from 
$K'$ by $B$-addition, where $B$ is the big-fork. 
If $K$ does not have a $B$-obstacle, then there is a safe reduction of $B_K$.
\end{theorem}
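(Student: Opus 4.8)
The plan is to reduce the big-fork case to the 3-ear and fork cases that have already been settled in Theorems~\ref{thm.frsst} and~\ref{thm.scnd}, by exploiting the structural definition of a big-fork as the union of a fork and a star fork joined by two extra edges (see Figure~\ref{fig.bigfork}). A big-fork $B$ has at most $j+4 = 5$ vertices of degree two in $G_i$, so the absence of a $B$-obstacle means that $K$ has strictly fewer than $2\cdot 5 = 10$ distinct $B$-no-edges. First I would split $B_K$ into the hexagons coming from its star-fork part and those coming from its fork part, and observe that reducing the star-fork hexagons first is always safe by Theorem~\ref{theo.sub-fork-ear}. The point is to choose a safe reduction of the star-fork hexagons that leaves the remaining fork-part hexagons configured so that the inherited $B$-no-edges do not conspire into an $F$-obstacle, an $F$-abad, or an $F$-bbad.

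The key steps, in order, would be the following. \textbf{Step 1:} Since $K$ has no $B$-obstacle, at least one potential pair of $B$-no-edges must coincide or fail to be present; I would use a counting argument on the $2(4+j)-1 \le 9$ available $B$-no-edges to locate at least one ``free'' degree-two vertex of $B$ whose incident white edge has both ends inside $V(B_K)$, or a non-red-connected potential pair. \textbf{Step 2:} Reduce the star-fork hexagons of $B$ by an appropriate perfect matching, chosen from the finitely many options, so that the reduction is safe (guaranteed by Theorem~\ref{theo.sub-fork-ear}) and so that the two extra connecting edges of the big-fork get contracted into real white edges joining the fork part to the rest. This converts $B_K$ into a configuration where only the fork hexagons $h_a, h_b, h_x, h_z, h_y$ remain, and the derived white edges produced carry the red-connection data. \textbf{Step 3:} Translate the ``no $B$-obstacle'' hypothesis into the statement that the resulting fork configuration is neither an $F$-obstacle, nor an $F$-abad, nor an $F$-bbad; then invoke Theorem~\ref{thm.scnd} to obtain a safe reduction of the remaining fork part. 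Composing the two safe reductions gives a safe reduction of all of $B_K$.

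The delicate point, and the step I expect to be the main obstacle, is \textbf{Step 3}: ensuring that the particular perfect matching chosen for the star-fork part in Step 2 does not inadvertently create one of the three fork-obstructions. The definition of a $B$-obstacle is purely a count ($2(4+j)$ distinct $B$-no-edges), whereas the fork-obstructions $F$-abad, $F$-bbad are geometric configurations involving which specific vertices ($r_1, r_1', r_2, r_2'$) are joined by white edges and which potential pairs are red-connected. Bridging the gap between the crude count and these fine configurations will require a careful case analysis, likely organized (as in the proof of Theorem~\ref{thm.scnd}) by which of the connecting vertices carry white edges, together with the freedom to choose between the complementary matchings $M_a, N_a$ and $M_b, N_b$ on the fork hexagons. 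Because the star fork contributes its own internal flexibility, I anticipate that in every case one can steer the reduction away from all three forbidden fork-configurations, but verifying this exhaustively is where the real work lies.

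Finally, I would remark that this argument only handles the base case $j=1$; the general $j$-big-fork case (Theorem~\ref{thm.j-bigforks}) should then follow by induction on $j$, peeling off the outermost star fork used to build the $j$-big-fork from a $(j-1)$-big-fork, reducing its hexagons safely, and applying the inductive hypothesis to the exposed $(j-1)$-big-fork once one checks that the count of surviving no-edges still precludes a $B'$-obstacle on the smaller fork.
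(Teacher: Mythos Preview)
Your plan is essentially the paper's own approach: safely reduce the star-fork hexagons first in a way that leaves the remaining fork hexagons free of an $F$-obstacle, $F$-abad, and $F$-bbad (the paper calls such a star-fork reduction \emph{fundamental}), and then apply Theorem~\ref{thm.scnd}.

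There is one genuine gap in your sketch. You invoke Theorem~\ref{theo.sub-fork-ear} to guarantee that the star-fork reduction is safe, but this is circular: the paper explicitly defers the star-fork case of Theorem~\ref{theo.sub-fork-ear} to Subsection~\ref{sub.supp}, which \emph{is} the proof of Theorem~\ref{thm.superfork}. So you must verify directly, for each candidate matching of $T_K$ you propose, that it is safe; you cannot import this fact. Relatedly, your Step~2 claim that the two connecting edges ``get contracted into real white edges'' is incorrect---after reducing $T_K$ they become \emph{derived} white edges incident to $V(F_K)$, and it is precisely their placement (together with the other new derived edges) that you must control to avoid the three fork-obstructions.

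The paper carries out the case analysis you anticipate by first using the no-$B$-obstacle hypothesis to locate a $B$-no-edge with both ends in $V(B_K)$, then splitting on whether the internal no-edges among $\{w_{x'},b_{x'},w_{y'},b_{y'},w_{b'},b_{b'}\}$ form a ``double'' pair (Case~1) or not (Case~2). The engine for Case~2 is Observation~\ref{o.super}, a routing lemma that records, for several explicit safe matchings of $T_K$, exactly which of the six no-edges lands on each of $u_1,v_1,u_2,v_2,u_3,v_3$; with this in hand the subcases (how many internal no-edges, and whether the fork-side attachment points lie in $h_a\cup h_b$ or $h_x\cup h_y$) are dispatched one by one. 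Your outline is right that this bookkeeping is where the work is; the missing ingredient is something like Observation~\ref{o.super} to make it tractable.
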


\subsubsection{Proof of Theorem \ref{thm.superfork}}
\label{sub.supp}
Let $F$ and $T$ denote the fork and the star fork respectively used to obtain $B$ 
(as explained in Figure~\ref{fig.bigfork}).

By Theorem~\ref{thm.scnd} it suffices to show that there is a safe reduction of $T_K$
that generates a pseudohex that has neither a $F$-obstacle, nor a $F$-abad, nor a $F$-bbad. 
We say that such a reduction of $T_K$ is \emph{fundamental}.

In this proof we use the names of vertices of the big-fork from Figure~\ref{fig.bigfork}. 
Namely, the vertex set of $T$ is $\{x',y',z',b'\}$ and the vertex set of $F$ is $\{x,y,z,a,b\}$.
In addition, we consider the bipartition classes of $K$ as white and black vertices.
We further denote by $b_{x'}$ the derived white edge incident to a black vertex of $h_{x'}$
and by $w_{x'}$ the derived white edge incident to a white vertex of $h_{x'}$. Analogously,
we use this notation for the derived white edges incident to $h_{y'}, h_{b'}$.
Let $M_{x'}, M_{y'}, M_{z'}, M_{b'}$ be the perfect matchings 
indicated in Figure~\ref{fig.star_fork_all} by thicker lines,
and we denote by $N_{x'}, N_{y'}, N_{z'}$ and $N_{b'}$ 
the perfect matchings that are complement
of $M_{x'}, M_{y'}, M_{z'}$ and $M_{b'}$, respectively.

\begin{figure}[h]
\centering
\ifpdf\input{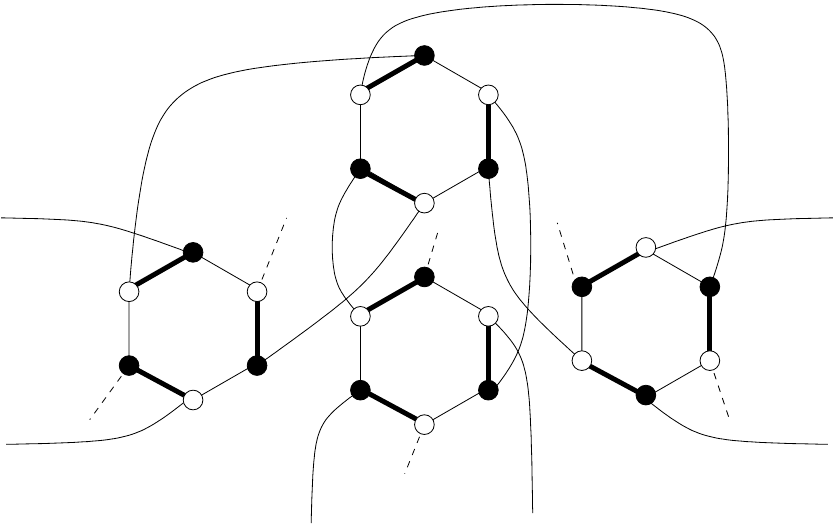_t}\else\input{star_fork_all.ps_t}\fi
\caption{Star fork. Perfect matchings $M_{x'}$, $M_{y'}$, $M_{z'}$ and $M_{b'}$ 
are depicted in $h_{x'}$, $h_{y'}$, $h_{z'}$ and $h_{b'}$ by thicker lines. Derived white edges are dotted}
\label{fig.star_fork_all}
\end{figure}

We still introduce one more notation. The hexagons $h_{x'}, h_{y'}, h_{b'}$ are connected by three pairs
of real white edges to three pairs of vertices in $V(K)\setminus V(T_K)$. Let us denote by $u_1, v_1$ 
the pair connected to $h_{x'}$, by  $u_2, v_2$ the pair connected 
to $h_{y'}$ and by $u_3, v_3$ the pair connected 
to $h_{b'}$. 
We note that for each $i=1,2,3$, the vertices $u_i, v_i$ belong to the same hexagon of $K$. 
We further note that $u_1, v_1, u_2, v_2$ are in $V(F_K)$ 
and the $u_3, v_3$ belong to $V(K')$. 
Let us assume that $u_i$ ($v_i$, respectively) belongs to the bipartition class of black
vertices (white vertices) of $K$, for each $i=1,2,3$.

By the hypothesis assumption, there exists a $B$-no-edge with both end-vertices in $V(B_K)$. 
We distinguish two cases.

{\bf Case 1.} There are distinct $p,q \in \{x',y',b'\}$
such that $w_{p}=b_{q}$ and $b_{p}=w_{q}$. Let $r \in \{x',y',b'\}-\{p,q\}$. 
Then both edges $w_{r}, b_{r}$ have exactly one end-vertex in $V(T_K)$.
We safely reduce hexagons $h_{x'}, h_{y'}, h_{z'}, h_{b'}$ by $M_{x'}, M_{y'}, M_{z'}, M_{b'}$. 
Two cases may happen: $w_{r}, b_{r}$ have end-vertices either in $V(F_K)$, or in $V(K')$. 
We note that in both cases the suggested reduction creates one derived white edge $e'$ 
subset of $V(F_K)$, and another derived white edge $e''$ incident to the 
same hexagon of $F_K$ as $e'$ that is incident to $V(F_K)$ and to $V(K')$. 
Hence, this reduction is fundamental.

{\bf Case 2.} Case 1 does not happen. Then we can assume without loss of generality that the 
$B$-no-edges which are subsets of $V(T_K)$ belong to the set $\{w_{x'}=b_{y'}, w_{y'}=b_{b'}, w_{b'}=b_{x'}\}$.

We first make an observation that
follows directly from the assumption of Case 2, Figure \ref{fig.star_fork_all}
and the symmetry of the star fork. 
\begin{observation}
\label{o.super}
The reduction of $h_{x'}, h_{y'}, h_{z'}, h_{b'}$ by $N_{x'}, M_{y'}, N_{z'}, N_{b'}$
is safe and generates a pseudohex so that the 
$F$-no-edge incident to $u_1$ ($u_2$, $u_3$, $v_1$, $v_2$ and $v_3$, respectively) is obtained
by contracting a path that contains $b_{b'}$ ($b_{x'}$, $b_{y'}$, $w_{x'}$, $w_{y'}$ and $w_{b'}$, respectively).
By symmetry of the star-fork, there are safe reductions of $T_K$
such that the $F$-no-edge incident to $u_1$ ($u_2$, $u_3$, $v_1$, $v_2$ and $v_3$, respectively) is obtained
by contracting a path that contains 
$b_{y'}$ ($b_{b'}$, $b_{x'}$, $w_{x'}$, $w_{y'}$ and $w_{b'}$, respectively), 
 $b_{x'}$ ($b_{y'}$, $b_{b'}$, $w_{b'}$, $w_{x'}$ and $w_{y'}$, respectively),
and $b_{x'}$ ($b_{y'}$, $b_{b'}$, $w_{y'}$, $w_{b'}$ and $w_{x'}$, respectively).
\end{observation}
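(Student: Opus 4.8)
The plan is to prove Observation~\ref{o.super} by a direct trace of the joint reduction in Figure~\ref{fig.star_fork_all}, followed by a symmetry argument. First I would fix the matchings $N_{x'}, M_{y'}, N_{z'}, N_{b'}$ on the hexagons $h_{x'}, h_{y'}, h_{z'}, h_{b'}$ and describe the contracted paths they produce. Each such path is an alternating sequence of blue matching edges (inside the four hexagons) and white edges of $K$ incident to $V(T_K)$. In particular, starting from a connecting vertex $u_i$ or $v_i$ --- which is joined to a leaf hexagon of $T_K$ by a real white edge --- the path runs through $T_K$ and is contracted to a single new derived white edge, the $F$-no-edge named in the statement; any contracted component lying entirely inside $V(T_K)$ is a cycle.

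For safety I would verify that each contracted path or cycle of the joint reduction meets at most one derived white edge $b_{\bullet}$ or $w_{\bullet}$. This is exactly the point at which the Case~2 hypothesis enters: since the only $B$-no-edges with both ends in $V(T_K)$ lie in the oriented family $\{w_{x'}=b_{y'},\, w_{y'}=b_{b'},\, w_{b'}=b_{x'}\}$ --- the two-sided pattern of Case~1 being excluded --- the matchings $N_{x'}, M_{y'}, N_{z'}, N_{b'}$ never route a single contracted component through two distinct derived edges. Reading the path out of each connecting vertex in Figure~\ref{fig.star_fork_all}, one checks simultaneously that exactly one derived edge is absorbed and which one it is: keeping the bipartition classes straight, the black vertices $u_1, u_2, u_3$ pick up $b_{b'}, b_{x'}, b_{y'}$ while the white vertices $v_1, v_2, v_3$ pick up $w_{x'}, w_{y'}, w_{b'}$, which is precisely the correspondence asserted in the first sentence of the observation.

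The two remaining reductions follow from the rotational symmetry of the star fork that cyclically permutes the leaves $(x', y', b')$ together with their connecting pairs $(u_i, v_i)$, their derived edges $b_{\bullet}, w_{\bullet}$, and the matchings on the leaf hexagons. Applying this rotation and its square to $N_{x'}, M_{y'}, N_{z'}, N_{b'}$ yields two further safe reductions of $T_K$, and reading the absorbed derived edges off the rotated figures reproduces the two cyclically shifted assignments listed in the statement. The step I expect to be the main obstacle is not conceptual but bookkeeping: confirming the parity and endpoint data --- that each contracted path joins a black connecting vertex to the correct $b_{\bullet}$ (and each white vertex to the correct $w_{\bullet}$) rather than to a cyclic neighbour --- and that the matching chosen on the central hexagon $h_{z'}$ routes the internal connections consistently under each rotation. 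I would therefore verify the base reduction exhaustively against Figure~\ref{fig.star_fork_all} before invoking symmetry, since any labelling slip there would propagate to both rotated cases.
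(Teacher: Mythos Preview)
Your overall strategy---trace the contracted paths in Figure~\ref{fig.star_fork_all} under the chosen matchings, use the Case~2 hypothesis to guarantee safety, then invoke the symmetry of the star fork---is exactly what the paper does (it says the observation ``follows directly from the assumption of Case~2, Figure~\ref{fig.star_fork_all} and the symmetry of the star fork''), so the plan is sound.

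There is, however, a concrete gap in your symmetry step. You write that ``the two remaining reductions follow from the rotational symmetry'' and propose to obtain them by applying the cyclic rotation and its square to the base matching $N_{x'}, M_{y'}, N_{z'}, N_{b'}$. But the observation lists \emph{three} further reductions, not two, so a $\mathbb{Z}_3$-orbit of a single base reduction (which contains at most three elements including the base) cannot account for all four. In fact the four assignments are not a cyclic orbit at all: writing the $u$- and $v$-assignments as shifts of the identity $u_i\mapsto b_{\mathrm{leaf}(i)}$, $v_i\mapsto w_{\mathrm{leaf}(i)}$, the four reductions realize the pairs $(\pm 1,0)$ and $(0,\pm 1)$, whereas a rotation of the star fork shifts both coordinates by the same amount. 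So rotation alone will never take you from the first pair of reductions (where the $w$-assignment is the identity) to the last pair (where the $b$-assignment is the identity).

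To close the gap you need more than the cyclic subgroup: either invoke the full $S_3$ symmetry of the star fork together with the black/white bipartite swap of the hexagon structure (which exchanges the roles of the $b_\bullet$ and $w_\bullet$ edges and hence swaps the two coordinates of the shift), or simply verify a second base reduction directly from the figure---for instance one with the $M$-matching placed on a different leaf and the complementary matching on $h_{z'}$---and then rotate that. Either route is short, but as written your argument produces only three of the four reductions the observation asserts.
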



We split up Case 2 into subcases.

{\bf Case 2.1}  $w_{x'}=b_{y'}$, $w_{y'}=b_{b'}$, $w_{b'}=b_{x'}$. Then, we reduce 
$h_{x'}, h_{y'}, h_{z'}, h_{b'}$ by $N_{x'}, M_{y'}, N_{z'}, N_{b'}$.
By Observation~\ref{o.super}, this is a safe reduction and 
it creates derived white edges $\{u_1, v_2\}$, $\{v_1, u_3\}$ and  $\{u_2, v_3\}$.
Since, $\{u_1, v_2\}$ is a subset of $V(F_K)$ and each $\{v_1, u_3\}$,  $\{u_2, v_3\}$
has exactly one end-vertex in $V(F_K)$ and these end-vertices belong to distinct
hexagons of $F_K$, this reduction is fundamental.

{\bf Case 2.2}  There are distinct $p, q, r \in \{x',y',b'\}$ so that $w_p= b_q$ and $w_r= b_p$. 
We first assume that either $w_{x'}= b_{y'}$ and $ w_{y'}= b_{b'}$, or $w_{y'}=b_{b'}$ and $w_{b'} = b_{x'} $. 
By Observation~\ref{o.super}, the reduction of
$h_{x'}, h_{y'}, h_{z'}, h_{b'}$ by $N_{x'}, M_{y'}, N_{z'}, N_{b'}$ 
creates derived white edges either $\{v_1, u_3\}$ and  $\{v_2, u_1\}$, or
$\{u_2, v_3\}$ and  $\{u_1, v_2\}$.
We now assume that $w_x=b_y$ and $w_b=b_x$. 
By Observation~\ref{o.super}, there is a the reduction of
$h_{x'}, h_{y'}, h_{z'}, h_{b'}$ that
creates derived white edges $\{u_2, v_3\}$ and  $\{u_1,v_2\}$.
Therefore, in all the cases there is a $F$-no-edge $e$ with both end-vertices in $V(F_K)$
and a $F$-no-edge $e'$ with exactly one end-vertex in $V(F_K)$ such that
$e$ and $e'$ are incident to the same hexagon in $F_K$. 
Hence, for each case the indicated reduction is fundamental.




{\bf Case 2.3}  There are distinct $p, q \in \{x',y',b'\}$ so that $w_p= b_q$.  

If at least three derived white edges incident to vertices in $V(F_K)$ have an
end vertex in $V(K')$ then, by Observation~\ref{o.super} 
 there exists a safe reduction of $T_K$ that creates a $F$-no-edge with both
 end-vertices in $V(F_K)$ (such a $F$-no-edge is the one created by the contraction
 of the path that contains $w_p= b_q$). Moreover, the resulting pseudohex
 still has at least three derived white edges incident to vertices in $V(F_K)$ and each with
 an end-vertex in $V(K')$. Hence, this reduction is fundamental.

If $\{u_1, u_2, v_1, v_2\}\cap V(F_K)\subset V(h_a\cup h_b)$ then, by Observation~\ref{o.super} 
there exists a safe  reduction of $T_K$ that creates a $F$-no-edge with both
 end-vertices in $V(h_a \cup h_b)$. Then, this reduction is fundamental; 
 see Figures~\ref{fig.abad},~\ref{fig.bbad}.

The remaining case is that $\{u_1, u_2, v_1, v_2\}\cap V(h_x\cup h_y)\neq \emptyset$ and 
at most two derived white edges with an end-vertex in $V(F_K)$ have end-vertices in $V(K)$.
By Observation~\ref{o.super} there exists a reduction of $T_K$ that creates a derived white edge 
(denoted say by $e'$ and obtained by contracting the path that contains $w_{p}= b_{q}$) 
with an end-vertex in $V(h_x\cup h_y)$ and an end-vertex in $V(K')$.  
Therefore, this reduction generates neither a $F$-bbad nor a $F$-abad, and
hence, we only need to show that this reduction does not create a $F$-obstacle.

If $V(F_K)$ has a derived white edge as a subset then the last is trivial.
Hence, let us assume that $V(F_K)$ have no derived white edges as a subset.
There are $4$ vertices of $V(F_K)$ contained in $B$-no-edges of $K$, 
and by assumption, at least two of them are in a $B$-no-edge with end-vertices in $V(T_K)$. 
One real white edge with end-vertices in $V(T_K)$ and in $V(K')$ is in the contracted
path that creates $e'$, and hence at least one 
derived white edge with end-vertices in $V(T_K)$ and in $V(F_K)$ 
is not on a contracted path that creates a new derived white edge incident to a 
vertex in $V(K')$. Then, this reduction creates a derived white edge with both end-vertices
in $V(F_K)$, and we thus, a $F$-obstacle is not generated. 

{\bf Case 2.4}  All derived white edges incident to a vertex in $V(T_K)$ 
are incident to a vertex in $V(K)\setminus V(T_K)$. 
We recall that $K$ is not a $B$-obstacle, and so at least one derived white 
edge, say edge $e'$,  has end-vertices in $V(T_K)$ and in $V(F_K)$.
Now we proceed similarly as in Case 3. 
There are $6$ distinct $B$-no-edges with one end-vertex in $V(T_K)$, and at most 
$4$ of them have an end-vertex in $V(F_K)$. Hence at least two of them have an end-vertex in $V(K')$.

If $\{u_1, u_2, v_1, v_2\}\cap V(h_x\cup h_y)\neq \emptyset$ then by Observation~\ref{o.super}
there exists a safe reduction of $T_K$ so that in the resulting pseudohex there is a new derived
white edge with end-vertices in $V(h_x\cup h_y)$ and in $V(K')$, and also a new derived white edge 
that has both end-vertices in $V(F_K)$. 
Hence, this reduction is fundamental, see Figures~\ref{fig.abad},~\ref{fig.bbad}.

Hence, let $\{u_1, u_2, v_1, v_2\}\cap V(F_K)\subset V(h_a\cup h_b)$. 
First let there be a $B$-no-edge with both end-vertices in $V(F_K)$. 
If there are two  $B$-no-edges with end-vertices in $V(T_K)$ and in $V(K')$ and these
two edges are incident to different hexagons of $T_K$, then by 
Observation~\ref{o.super} there is a safe reduction of $T_K$ such that for both hexagons 
$h_a, h_b$ there are new derived white edge incident to them and to $V(K')$. This reduction is fundamental 
by Figures~\ref{fig.abad},~\ref{fig.bbad}. Otherwise, necessarily exactly two $B$-no-edges 
have end-vertices in $V(T_K)$ and in $V(K')$, and four $B$-no-edges have end-vertices in $V(F_K)$.
By Observation~\ref{o.super}, there is a  safe reduction of $T_K$ such that 
there are new derived white edges with both end-vertices in $V(F_K)$ and incident
to $V(h_a)$ and to $V(h_b)$ (not necessarily one derived white edge incident to both). 
Hence, this reduction is fundamental.

Let there be no $B$-no-edge subset of $V(F_K)$. If at least one $B$-no-edge have its end-vertices in
$V(F_K)$ and in $V(K')$, 
then we note that by our assumptions such $B$-no-edge is incident to $V(h_x\cup h_y)$.
By Observation~\ref{o.super}, there exists a safe reduction of $T_K$ so that the new derived 
white edge obtained by contracting the path that contains $e'$ is incident 
to $V(h_a\cup h_b)$ and thus it is a subset of $V(F_K)$.
Therefore, this reduction is fundamental. 

Finally let all four $B$-no-edges have their end-vertices in $V(h_x\cup h_y)$ and in $V(T_K)$. 
Then by Observation~\ref{o.super}, there exists  
a safe reduction of $T_K$ so that there are new derived white edges 
incident to $h_a$ and to $h_b$ which are subsets of $V(F_K)$. 
Consequently, this reduction is fundamental. 

\hfill $\square$
 
Finally, we extend Theorem~\ref{thm.superfork} from $j=1$ to general $j\geq 1$.     

\begin{theorem}
\label{thm.j-bigforks} 
Let $K, K'$ be proper pseudohexes such that $K$ is obtained from 
$K'$ by $B$-addition, where $B$ is the $j$-big-fork for $j\geq 1$. 
If $K$ does not have a $B$-obstacle,
then there is a safe reduction of~$B_K$.
\end{theorem}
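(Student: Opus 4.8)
The plan is to argue by induction on $j$, taking the base case $j=1$ to be exactly Theorem~\ref{thm.superfork}. For the inductive step I fix $j\geq 2$ and recall that $B$ is obtained from a $(j{-}1)$-big-fork $B'$ and a star fork $T$ by connecting two leaves of $T$ to two degree-$\leq 2$ vertices of $B'$, so that, writing $T_K$ and $B'_K$ for the corresponding hexagon sets of $K$, we have $B_K = B'_K \cup T_K$. I would mimic the strategy of the proof of Theorem~\ref{thm.superfork}: reduce the star fork first, and reserve the induction hypothesis for $B'$. Precisely, I will seek a safe reduction of $T_K$ whose resulting pseudohex $\tilde K$ is proper and has no $B'$-obstacle (the analogue, for the $(j{-}1)$-big-fork, of a \emph{fundamental} reduction). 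Granting such a reduction, Theorem~\ref{thm.j-bigforks} for $j{-}1$ applies to $\tilde K$ and yields a safe reduction of $B'_{\tilde K}$; composing it with the reduction of $T_K$ produces a safe reduction of $B_K$, as required.

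It then remains to produce such a reduction of $T_K$. Since $K$ has no $B$-obstacle, it has strictly fewer than $2(4{+}j)$ distinct $B$-no-edges, and hence, exactly as in the opening step of the proof of Theorem~\ref{thm.superfork}, there is a $B$-no-edge with both end-vertices in $V(B_K)$. The $B'$-no-edges of $\tilde K$ are of two kinds: the $B$-no-edges of $K$ that are disjoint from $V(T_K)$, and the new derived white edges created by reducing $T_K$ that leave $V(B'_K)$. A $B'$-obstacle in $\tilde K$ means exactly $2(3{+}j)$ such edges, so it suffices to choose the reduction of $T_K$ so that at least one newly created derived white edge has both end-vertices in $V(B'_K)$; this uses two outgoing slots of $B'$ for a single edge and keeps the count strictly below $2(3{+}j)$.

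The tool for this is Observation~\ref{o.super}, whose proof uses only the symmetry of the star fork and therefore applies verbatim with $F$ replaced by $B'$: by that symmetry there are several safe reductions of $T_K$ realizing, for each pair $u_i,v_i$ through which $T$ meets the rest of $K$, prescribed choices of which derived white edge ($b_{x'},w_{x'},\dots$) is contracted onto it. I would organize the verification by the same case split as in the proof of Theorem~\ref{thm.superfork} --- according to whether the internal $B$-no-edges of $T_K$ pair its leaves entirely among themselves (Cases~1--2 there) or only partially (Cases~2.3--2.4 there) --- and in each case select, among the safe reductions guaranteed by the restated Observation~\ref{o.super}, one whose new internal derived white edge lands inside $V(B'_K)$ rather than toward $V(K')$. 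Along the way one checks that the chosen reduction creates no end and preserves $2$-connectivity of $G^{\tilde K}$, so that $\tilde K$ is again proper.

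I expect the main obstacle to be the uniform bookkeeping of this last step: one must verify, independently of $j$, that the flexibility provided by Observation~\ref{o.super} always lets the internal derived white edge be routed into $B'$, across every arrangement of the (at most four) connections of $T$ to $B'$ and the (at least two) connections to $V(K')$. This is the precise analogue of Case~2 of the proof of Theorem~\ref{thm.superfork}, but with the three disjunctive conditions there (no $F$-obstacle, no $F$-abad, no $F$-bbad) collapsed into the single condition ``no $B'$-obstacle'', which is purely a count; that simplification is exactly what should make the induction go through uniformly in $j$.
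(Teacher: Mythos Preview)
Your proposal is correct and follows the same overall architecture as the paper's proof: induct on $j$, peel off the star fork $T$ via a safe reduction chosen so that the resulting pseudohex $\tilde K$ has no $B'$-obstacle, and then invoke the induction hypothesis on the $(j{-}1)$-big-fork $B'$.

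The difference is one of economy. You plan to rerun the full case analysis of Theorem~\ref{thm.superfork} (Cases~1, 2.1--2.4) in order to land an internal derived white edge inside $V(B'_K)$. The paper dispenses with almost all of this. It simply locates the guaranteed $B$-no-edge $e$ with both end-vertices in $V(B_K)$ and splits into two cases on its position. If $e\subset V(B'_K)$, then \emph{any} safe reduction of $T_K$ (supplied by Theorem~\ref{theo.sub-fork-ear}) will do: $e$ is untouched by reducing $T_K$ and already witnesses that $\tilde K$ has no $B'$-obstacle, so you do not need to manufacture a new internal edge at all. If instead $e$ meets $V(T_K)$, then Observation~\ref{o.super} is invoked once to pick a safe reduction of $T_K$ that routes the contracted path through $e$ to a derived white edge with both end-vertices in $V(B'_K)$; no further subcases are needed. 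In short, the very simplification you identify in your last paragraph---that the three disjunctive conditions on $F$ collapse to the single counting condition ``no $B'$-obstacle''---allows the paper to replace the entire case tree by this two-line dichotomy.
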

\begin{proof}
By induction on $j$. By Theorem~\ref{thm.superfork}, the statement
holds for $j{=}1$. Let $j>1$ and $B$ be a $j$-big-fork. Let there
$B'$ and $T$ denote the $(j-1)$-big-fork and the star, respectively, from
which $B$ is obtained.
Since $K$ does not have a $B$-obstacle there is a derived white edge, say $e$, with both end vertices
in $V(B_K)$. If such a derived white edge has both end-vertices in $B'_K$, then by the induction
hypothesis and Theorem~\ref{theo.sub-fork-ear}, we can conclude that there is a safe reduction of $B_K$.
If $e$ has one end-vertex in $T_K$ and the other one in $B'_K$ or 
both end-vertices in $T_K$, by the Observation~\ref{o.super} contained in the proof of Theorem~\ref{thm.superfork}, 
we have that there is a safe reduction of $T_K$ that creates a derived white edge with both end-vertices
in $K_{B'}$ and therefore, again by the induction hypothesis we can find a safe reduction $B'_K$.
The result follows.
\end{proof}

\section{Acknowledgement}  We would like to thank to Mihyun Kang for extensive discussions.


\end{document}